\newcommand{\beweis}{\ifthenelse{\boolean{skript}} {
\begin{proof}[Beweis]
\mbox{}\vfill \mbox{}
\end{proof}
\pagebreak }{} }
\newcommand{\extra}{\ifthenelse{\boolean{skript}} {\newpage}{}}
\DeclareMathOperator{\Var}{Var} \DeclareMathOperator{\Cov}{Cov}
\DeclareMathOperator{\E}{{\mathbb E}}
\DeclareMathOperator{\N}{{\mathbb N}}
\DeclareMathOperator{\No}{N}
\newcommand{\bemerke}[1]{\ifthenelse{\boolean{skript}} {\marginpar{\scriptsize #1}}{}}
\newtheorem{satz}{Satz}[section]
\newtheorem{remark}[satz]{Remark}
\newtheorem{theorem}[satz]{Theorem}
\newtheorem{proposition}[satz]{Proposition}
\newtheorem{corollary}[satz]{Corollary}
\newtheorem{lemma}[satz]{Lemma}
\begin{document}
\global\long\def\I{\mathbf{1}}
\global\long\def\L{\mathcal{L}}
\global\long\def\mc{\omega(\Delta)}
\global\long\def\RG{\mathcal{R}}
\global\long\def\Eb{\mathbb{E}_{\sigma,b}}

\title[Inference for fractional Ornstein-Uhlenbeck type processes]{Inference for fractional Ornstein-Uhlenbeck type processes with periodic mean in the non-ergodic case}
\author{Radomyra Shevchenko and Jeannette H.C. Woerner }
\address{Fakult\"at Mathematik, Technische Universit\"at Dortmund,
          Vogelpothsweg 87,
          D-44221 Dortmund, Germany}
\email{radomyra.shevchenko@math.tu-dortmund.de, jeannette.woerner@math.tu-dortmund.de}
\subjclass[2010]{Primary 62M09; Secondary 60G22, 60H10}
\keywords{Fractional Ornstein Uhlenbeck process, Long range dependence, Periodic mean function, Least squares estimator, Non-ergodicity }
\maketitle
\begin{abstract}
In the paper we consider the problem of estimating parameters entering the drift of a fractional Ornstein-Uhlenbeck type process in the non-ergodic case, when the underlying stochastic integral is of Young type. We consider the sampling scheme that the process is observed continuously on $[0,T]$ and $T\to\infty$. For known Hurst parameter $H\in(0.5, 1)$, i.e. the long range dependent case, we construct a least-squares type estimator and establish strong consistency. Furthermore, we prove a second order limit theorem which provides asymptotic normality for the parameters of the periodic function with a rate depending on $H$ and a non-central Cauchy limit result for the mean reverting parameter with exponential rate.  For the special case that the periodicity parameter is the weight of a periodic function, which integrates to zero over the period, we can even improve the rate to $\sqrt{T}$.
\end{abstract}
\pagenumbering{arabic}
\section{Introduction}
Parameter estimation in fractional diffusions has been actively studied in recent years, especially for equations of the Ornstein-Uhlenbeck type, i.e. of the form
\[dX_t = \alpha X_t dt + dB^H_t,\]
where $B^H_t$ is a fractional Brownian motion (fBm for short), which is a centred Gaussian process with almost surely continuous paths defined via its covariance structure
\[ \mathbb E [B^H_t B^H_s]=\frac{1}{2} \left(t^{2H}+s^{2H}-|t-s|^{2H} \right) \]
for $H\in (0,\,1)$.

With no initial condition imposed, the equation has an ergodic solution for $\alpha <0$, which is why this case is often called ergodic, as opposed to the non-ergodic case $\alpha >0$.

Several approaches are known for the estimation of $\alpha$, among them the MLE approach in \cite{KLB} which uses the so called fundamental martingales related to the underlying fBm and a minimum $L_1$-norm estimation in \cite{Rao} based on the techniques from \cite{KPi}.

Another possibility for the estimation is offered by the least squares approach, for which (following a heuristic notation) the term $\int_0^n (\dot{X}_t+ \alpha X_t)^2 dt$ is minimised, leading to the estimator
\[\tilde{\alpha}_n:=\frac{\int_0^n X_t dX_t}{\int_0^n X_t^2 dt}.\]
For $H=\frac{1}{2}$ the process $B^H$ is the classical Brownian motion, which allows for It\={o} integration in this definition. However, for $H\neq \frac{1}{2}$ fBm is not a semimartingale, so in order to define such an estimator one has to find a different (and suitable) kind of a stochastic integral, possible choices including pathwise (or Young type) and Skorokhod (or divergence type) integrals. Form the practical point of view pathwise integrals are preferred, however, for $\alpha<0$ this choice does not yield a consistent estimator. This was shown in \cite{HNu} alongside with consistency and asymptotic normality for the estimators defined with It\=o integrals for $H=\frac{1}{2}$ and for those defined with Skorokhod integrals for $H\in \Big(\frac{1}{2},\,\frac{3}{4}\Big)$. The non-ergodic case was treated in \cite{BESO} for $H\in \Big(\frac{1}{2},\,1\Big)$, where the same estimator defined with Young-type integrals is shown to be consistent and asymptotically Cauchy distributed. Note that for $H>1/2$ the processes possess long range dependence, which offers interesting possibilities for modelling.

In this paper we study Ornstein-Uhlenbeck type equations with an additional periodic mean term, i.e. equations of the form
\[dX_t = (L(t)+\alpha X_t) dt + dB^H_t\]
with a periodic, parametric function $L$, which can be used for modelling seasonalities, and assume continuous observations. In this case a consistent and asymptotically normal least squares-type estimator (coinciding with the MLE) for the drift parameters (including $\alpha <0$) was constructed for $H=\frac{1}{2}$ in \cite{DFK}, and it was later shown in \cite{DFW} (again, for $\alpha <0$) that a similarly defined estimator with divergence integrals has the properties of weak consistency and asymptotic normality for $H\in \Big(\frac{1}{2},\,\frac{3}{4}\Big)$. Strong consistency was then proved in \cite{BESV}.

 We will consider the same construction for the non-ergodic case and $H\in \Big(\frac{1}{2},\,1\Big)$ and investigate its asymptotic properties. We will prove strong consistency for our proposed estimator. Furthermore, we prove a second order limit theorem which provides asymptotic normality for the parameters of the periodic functions and a non-central Cauchy limiting result for the mean reverting parameter. Both limits are uncorrelated. This means in particular, we will show that the asymptotics is partly inherited from the ergodic case treated in \cite{DFW} and partly follows the results for the non-ergodic case in \cite{BESO}. In addition we show that in the special case that the periodicity parameter is the weight of a periodic function, whose integral over the period is zero, we get a faster rate of convergence independent of $H$, namely $\sqrt{T}$. In this case the component is uncorrelated to all other components. Note, that this case was not yet treated both in the ergodic and non-ergodic case. 

The structure of the paper is as follows. In Section \ref{sec:Setting} the setting will be explained in more detail, such that we can proceed with the motivation and definition of the estimator in Section \ref{sec:Constr}. Section \ref{sec:Aux} contains auxiliary convergence statements, and finally in Section \ref{sec:Asympt} the main results and their proofs are presented.
The Appendix provides some further technical results.

\section{Setting}\label{sec:Setting}
Let $(B^H_t)_{t\in\mathbb R^+}$ be a fractional Brownian motion with known Hurst index $H\in \big(\frac{1}{2},\,1\big)$. Consider a stochastic differential equation ( SDE) of the following form:\
\begin{equation}\label{eq:1}
\begin{split} 
X_t & = X_0 + \int_0^t L(s)+\alpha X_s ds + \int_0^t \sigma dB^H_s,\\
X_0 &= x_0 \in \mathbb R .
\end{split}
\end{equation}

$L$ is assumed to be a bounded $1$-periodic function which can be written as a linear combination of $p$ known bounded $1$-periodic $L^2([0,\,1])$-orthonormal functions with unknown real coefficients, $\mu=(\mu_1, \cdots, \mu_p)$ i.e.
\[L(s)=\sum_{i=1}^{p} \mu_i \phi_i (s)\text{ for all }s\in[0,\,1].\]
We assume that the mean-reverting parameter $\alpha>0$ is also unknown. As argued in \cite{Rao}, $\sigma$ can be estimated with probability one on any finite time interval, therefore it can be assumed to be known.

Moreover, it is important to define the stochastic integral appearing in the equation. In this paper we will consider it to be defined in Young's sense (cf. \cite{You}). The integrals are well defined due to H\"older continuity  of paths of the fractional Brownian motion of order $H$. Note that for deterministic integrands stochastic integrals in Young's sense almost surely coincide with Skorokhod integrals.

The equation \eqref{eq:1} has a solution with almost surely continuous paths, which can be written as
\[ X_t = e^{\alpha t}x_0 +e^{\alpha t}\int_0^t e^{-\alpha s} L(s) ds + \sigma e^{\alpha t}\int_0^t e^{-\alpha s}dB^H_s\]
for $\alpha >0$.
Let us fix the notation $\xi_t:=  e^{\alpha t}\int_0^t e^{-\alpha s}dB^H_s$, $\tilde{\xi}_t:= e^{-\alpha t}X_t$ as well as
\[\xi_{\infty}:= \int_0^\infty e^{-\alpha s}dB^H_s\]
and
\[\tilde{\xi}_{\infty}:=x_0 + \int_0^\infty  e^{-\alpha s} L(s) ds + \sigma\int_0^\infty e^{-\alpha s}dB^H_s.\]
We assume to observe $X$ continuously on [0,T] and derive limits for $T\to\infty$. 

\section{Construction of the estimator}\label{sec:Constr}
The estimator that we will consider has the same structure as the estimator defined in \cite{DFW} for the ergodic case. We will briefly outline the motivation as it was given there. The construction follows the least squares method applied to a discretised version of a more general equation
\[dX_t = \langle\theta,\,f(t,\,X_t)\rangle dt + \sigma dB^H_t,\]
where $\theta:=(\theta_1,\dots ,\theta_{p+1})$ is a parameter vector and $f(t,\,x):=(f_1(t,\,x),\dots ,f_{p+1}(t,\,x) )$ is a collection of known real-valued functions. For a time interval $[0,\,T]$ and a uniform mesh size $\Delta t:=T\slash N$ the least squares approach for the equations
\[ X_{(i+1)\Delta t}-X_{i\Delta t}=\sum_{j=1}^{p+1}f_j (i\Delta t,\, X_{i\Delta t})\theta_j \Delta t + \sigma (B^H_{(i+1)\Delta t}-B^H_{i\Delta t}),\,i\in \{1,\dots ,N \}, \]
yields the estimator
$\tilde{\theta}_{T,\,\Delta t}=Q_{T,\,\Delta t}^{-1}P_{T,\,\Delta t}$ with
\[Q_{T,\,\Delta t}=\left( \sum_{i=0}^N f_j(i\Delta t,\, X_{i\Delta t})f_k (i\Delta t,\,X_{i\Delta t})\Delta t\right)_{j,\,k\in \{1,\dots , p+1 \}}\]
and
\begin{align*}
P_{T,\,\Delta t}=\Big(\sum_{i=0}^N f_1(i\Delta t,\, X_{i\Delta t})&(X_{(i+1)\Delta t} - X_{i\Delta t}), \dots , \\
& \sum_{i=0}^N f_{p+1}(i\Delta t,\, X_{i\Delta t})(X_{(i+1)\Delta t} - X_{i\Delta t}) \Big)^T.
\end{align*}
Replacing the sums by their continuous counterparts and considering the special case $\theta = \vartheta := (\mu_1,\dots ,\mu_p,\,\alpha)$, $f(t,\,x)=(\phi_1 (t),\dots ,\phi_p(t),\, x)$ as well as putting $T=n$ we obtain the estimator $\hat{\vartheta}:=Q_n^{-1}P_n$ with
\[P_n=\left(\int_0^{n} \phi_1 (t) dX_t,\dots ,\int_0^{n} \phi_p (t) dX_t,\, \int_0^{n} X_t dX_t\right)\]
and
\[Q_n=\begin{pmatrix}
n E_p & a_n\\
a_n^T & b_n
\end{pmatrix},\]
where
\[a_n^T=\left(\int_0^{n} \phi_1 (t) X_t dt\dots , \int_0^{n} \phi_p (t) X_t dt\right),\]
\[b_n=\int_0^{n} X_t^2 dt.\]
The two following results are an immediate analogy to the calculations in \cite{DFW}.
\begin{proposition}
We have $\hat{\vartheta}_n = \vartheta + \sigma Q_n^{-1}R_n$, where
\[R_n = \left(\int_0^n \phi _1 (t) dB^H_t,\dots , \int_0^n \phi_p (t)dB^H_t,\, \int_0^n X_t dB^H_t\right)^T.\]
\end{proposition}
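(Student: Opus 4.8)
The plan is to exploit the matrix/vector structure already built into the definitions and reduce everything to the linearity of the Young integral with respect to its integrator. First I would collect the coordinates of $P_n$ into the single vector-valued Young integral $P_n = \int_0^n f(t,X_t)\,dX_t$, with $f(t,x) = (\phi_1(t),\dots,\phi_p(t),x)^T$, and recognise that
\[\langle \vartheta, f(t,X_t)\rangle = \sum_{i=1}^p \mu_i \phi_i(t) + \alpha X_t = L(t) + \alpha X_t,\]
so that the SDE \eqref{eq:1} reads $dX_t = \langle \vartheta, f(t,X_t)\rangle\,dt + \sigma\,dB^H_t$ in differential form.

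Second, I would substitute this expression for $dX_t$ into $P_n$. Since the drift $\int_0^{\cdot}(L(s)+\alpha X_s)\,ds$ has absolutely continuous (hence finite-variation) paths while $B^H$ is only H\"older, the Young integral is additive over this decomposition of the integrator, and the integral against the drift part collapses to an ordinary Lebesgue integral. This gives
\[P_n = \int_0^n f(t,X_t)\,\langle \vartheta, f(t,X_t)\rangle\,dt + \sigma\int_0^n f(t,X_t)\,dB^H_t = \Big(\int_0^n f(t,X_t)f(t,X_t)^T\,dt\Big)\vartheta + \sigma R_n,\]
where I used $f(t,X_t)^T\vartheta = \langle \vartheta, f(t,X_t)\rangle$ and pulled the constant vector $\vartheta$ out of the integral.

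Third, I would check that the matrix $\int_0^n f(t,X_t)f(t,X_t)^T\,dt$ is exactly $Q_n$ by comparing entries: the $(j,k)$-block with $j,k\le p$ gives $\int_0^n \phi_j(t)\phi_k(t)\,dt = n\delta_{jk}$ by the $1$-periodicity and $L^2([0,1])$-orthonormality of the $\phi_i$, so the top-left block is $nE_p$; the mixed entries give $\int_0^n \phi_j(t)X_t\,dt$, i.e.\ the components of $a_n$; and the bottom-right entry gives $\int_0^n X_t^2\,dt = b_n$. Hence $P_n = Q_n\vartheta + \sigma R_n$, and multiplying by $Q_n^{-1}$ yields $\hat\vartheta_n = Q_n^{-1}P_n = \vartheta + \sigma Q_n^{-1}R_n$.

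The computation is routine linear algebra; the only genuine point requiring care is the second step, namely justifying that the Young integral splits over the drift-plus-noise decomposition of $dX_t$ and that the integral against the finite-variation drift reduces to a classical one. This is standard for Young integration once one knows the paths of $X$ are H\"older of the same order as $B^H$, but it is the place where the pathwise nature of the integral is actually used. I would also note that the identity presupposes the a.s.\ invertibility of $Q_n$, which is what makes $\hat\vartheta_n = Q_n^{-1}P_n$ well defined in the first place.
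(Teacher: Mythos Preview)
Your argument is correct and is essentially the same as the paper's: both substitute the SDE $dX_t=(L(t)+\alpha X_t)\,dt+\sigma\,dB^H_t$ into the integrals defining $P_n$ and read off $P_n=Q_n\vartheta+\sigma R_n$. The paper does this componentwise for $\int_0^n\phi_i(t)\,dX_t$ and $\int_0^n X_t\,dX_t$, whereas you package the same computation in the vector form $P_n=\int_0^n f(t,X_t)f(t,X_t)^T\,dt\,\vartheta+\sigma R_n$ and then identify the matrix with $Q_n$; your extra remarks on the additivity of the Young integral over the finite-variation drift part and on the a.s.\ invertibility of $Q_n$ are justified caveats that the paper leaves implicit.
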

\begin{proof}
Since
\[\int_0^n \phi_i(t)dX_t= \sum_{j=1}^p \mu_j \int_0^n \phi_i(t)\phi_j(t)dt+\alpha \int_0^n \phi_i(t)X_t dt + \sigma \int_0^n \phi_i (t)dB^H_t\]
for $i\in \{1,\dots p \}$ and
\[\int_0^n X_t dX_t= \sum_{j=1}^p \mu_j \int_0^n X_t\phi_j(t)dt+\alpha \int_0^n X^2_t dt + \sigma \int_0^n X_t dB^H_t,\]
we have $P_n = Q_n\vartheta + \sigma R_n$, and the claim follows.
\end{proof}
\begin{proposition}
We have an explicit representation for $Q_n^{-1}$, namely
\[Q_n^{-1}=\frac{1}{n}\begin{pmatrix}
E_p + \gamma_n \Lambda_n \Lambda_n^t & -\gamma_n\Lambda_n\\
-\gamma_n \Lambda_n^t & \gamma_n
\end{pmatrix}\]
with
\[\Lambda_n = (\Lambda_{n,\,1},\dots , \Lambda_{n,\,p})^t =\left(\frac{1}{n}\int_0^n \phi_1 (t)X_t dt,\dots , \frac{1}{n}\int_0^n \phi_p (t)X_t dt\right)\]
and $\gamma_n = D_n^{-1} = \bigg(\frac{1}{n}\int_0^n X_t^2 dt-\sum_{i=1}^p \Lambda_{n,\,i}^2\bigg)^{-1}$.
\end{proposition}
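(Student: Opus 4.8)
The plan is to treat $Q_n$ as a $2\times 2$ block matrix with the scalar corner $b_n$ and to invert it via the Schur complement of the easily invertible upper-left block $nE_p$. By the very definition of $\Lambda_n$ we have $a_n = n\Lambda_n$, so the relevant Schur complement is
\[S_n = b_n - a_n^T (nE_p)^{-1} a_n = b_n - \tfrac{1}{n}\lvert a_n\rvert^2 = b_n - n\,\Lambda_n^t\Lambda_n = n\Big(\tfrac{1}{n}\int_0^n X_t^2\,dt - \sum_{i=1}^p \Lambda_{n,i}^2\Big) = nD_n,\]
so that $S_n^{-1} = \tfrac{1}{n}\gamma_n$ with $\gamma_n = D_n^{-1}$ exactly as in the statement. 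The factor-$n$ bookkeeping distinguishing $a_n$ from $\Lambda_n$ and $S_n$ from $D_n$ is the only thing to keep track of.

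I would then invoke the standard block-inversion identity: for $\begin{pmatrix}A & B\\ C & D\end{pmatrix}$ with $A$ and $S = D - CA^{-1}B$ invertible, the inverse equals
\[\begin{pmatrix} A^{-1} + A^{-1}B S^{-1} C A^{-1} & -A^{-1}B S^{-1}\\ -S^{-1}CA^{-1} & S^{-1}\end{pmatrix}.\]
Substituting $A^{-1} = \tfrac{1}{n}E_p$, $B = n\Lambda_n$, $C = n\Lambda_n^t$ and $S^{-1} = \tfrac{1}{n}\gamma_n$, the factors of $n$ cancel in each block and one reads off precisely the four claimed entries: the lower-right entry is $\tfrac{1}{n}\gamma_n$, the off-diagonal entries are $-\tfrac{1}{n}\gamma_n\Lambda_n$ and $-\tfrac{1}{n}\gamma_n\Lambda_n^t$, and the upper-left entry is $\tfrac{1}{n}(E_p + \gamma_n\Lambda_n\Lambda_n^t)$. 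Alternatively, and arguably cleaner to write up, one can bypass the Schur formula and simply verify by block multiplication that $Q_n$ times the proposed matrix equals $E_{p+1}$; this reduces to the scalar identity $\gamma_n D_n = 1$ together with the cancellation of the rank-one term $\gamma_n\Lambda_n\Lambda_n^t$ against the cross terms.

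The computation is routine, so the one point genuinely worth justifying is that $D_n$ is nonzero, which is implicit in the notation $\gamma_n = D_n^{-1}$. I would establish $D_n \ge 0$ by a Bessel/Cauchy--Schwarz argument: since the $\phi_i$ are $1$-periodic and $L^2([0,1])$-orthonormal, they are orthogonal on $[0,n]$ with $\int_0^n \phi_i\phi_j\,dt = n\delta_{ij}$, so $n\sum_{i=1}^p\Lambda_{n,i}^2$ is exactly the squared $L^2([0,n])$-norm of the orthogonal projection of $X$ onto $\mathrm{span}\{\phi_1,\dots,\phi_p\}$, whence Bessel's inequality yields $n\sum_i\Lambda_{n,i}^2 \le \int_0^n X_t^2\,dt$, i.e. $D_n\ge 0$. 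Strict positivity (and hence invertibility) holds almost surely because the continuous path of $X$ does not a.s. lie in this finite-dimensional subspace. In short, there is no deep obstacle here beyond the careful scalar bookkeeping and this brief verification that the Schur complement is invertible.
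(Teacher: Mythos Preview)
Your argument is correct and essentially coincides with the underlying computation: this is the standard block inversion via the Schur complement of $nE_p$, and your bookkeeping of the factors of $n$ is accurate. The paper itself does not carry out this computation but simply cites the analogous identity (with flipped off-diagonal signs) from \cite{DFW}; your write-up is therefore more self-contained, and your Bessel-inequality justification that $D_n>0$ almost surely is a welcome addition that the paper leaves implicit.
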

\begin{proof}
This is a consequence of the fact that
\[\begin{pmatrix}
n E_p & -a_n\\
-a_n^T & b_n
\end{pmatrix}^{-1}= \frac{1}{n}\begin{pmatrix}
E_p + \gamma_n \Lambda_n \Lambda_n^t & \gamma_n\Lambda_n\\
\gamma_n \Lambda_n^t & \gamma_n
\end{pmatrix}, \]
which was proved in \cite{DFW}.
\end{proof}

\section{Auxiliary convergence results}\label{sec:Aux}
In this section we provide some convergence results for the different components of the estimators defined in the previous section. They help to identify the dominating terms for the asymptotic behaviour of the estimator.
The first lemma in this section as well as its proof are motivated by analogous results in \cite{BESO}.

\begin{lemma}
\label{l:1}
With the above notation we have $e^{-\alpha t}X_t \to \tilde{\xi}_{\infty}$ as well as $e^{-2\alpha t}\int_0^t X_s^2 ds \to \frac{\tilde{\xi}_{\infty}^2}{2\alpha}$ almost surely.
\end{lemma}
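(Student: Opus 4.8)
The plan is to establish both convergences starting from the explicit representation $X_t = e^{\alpha t}x_0 + e^{\alpha t}\int_0^t e^{-\alpha s}L(s)\,ds + \sigma e^{\alpha t}\xi_t$, where $\xi_t = e^{\alpha t}\int_0^t e^{-\alpha s}dB^H_s$ (here I am folding the $\sigma$ into the notation as in the excerpt). Multiplying through by $e^{-\alpha t}$ gives
\[
e^{-\alpha t}X_t = x_0 + \int_0^t e^{-\alpha s}L(s)\,ds + \sigma\int_0^t e^{-\alpha s}dB^H_s .
\]
For the first claim I would argue that each of the three terms converges almost surely as $t\to\infty$: the constant $x_0$ is trivial; the deterministic integral $\int_0^t e^{-\alpha s}L(s)\,ds$ converges to $\int_0^\infty e^{-\alpha s}L(s)\,ds$ because $L$ is bounded and $e^{-\alpha s}$ is integrable on $\mathbb R^+$; and the stochastic integral $\int_0^t e^{-\alpha s}dB^H_s$ converges almost surely to $\xi_\infty = \int_0^\infty e^{-\alpha s}dB^H_s$. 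Summing the three limits gives exactly $\tilde\xi_\infty$.

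The main obstacle is the almost sure convergence of the Young integral $\int_0^t e^{-\alpha s}dB^H_s$ to its limit $\xi_\infty$, which is where the argument from \cite{BESO} is needed. For a deterministic integrand the Young integral coincides almost surely with the divergence (Skorokhod) integral and is a centred Gaussian process; one can use integration by parts to write $\int_0^t e^{-\alpha s}dB^H_s = e^{-\alpha t}B^H_t + \alpha\int_0^t e^{-\alpha s}B^H_s\,ds$, and then control the tail $\int_t^\infty$ via the almost sure sublinear growth of fractional Brownian motion, namely $B^H_s = o(s^{H+\varepsilon})$ as $s\to\infty$, against the exponential decay $e^{-\alpha s}$. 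This shows the Cauchy tails vanish almost surely and yields convergence to the well-defined random variable $\xi_\infty$.

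For the second claim I would write $e^{-2\alpha t}\int_0^t X_s^2\,ds = \int_0^t e^{-2\alpha(t-s)}\big(e^{-\alpha s}X_s\big)^2\,ds$ and change variables $u = t-s$ to obtain $\int_0^t e^{-2\alpha u}\big(e^{-\alpha(t-u)}X_{t-u}\big)^2\,du$. Since $e^{-\alpha(t-u)}X_{t-u}\to\tilde\xi_\infty$ almost surely as $t\to\infty$ for each fixed $u$ by the first part, and the weight $e^{-2\alpha u}$ is integrable with $\int_0^\infty e^{-2\alpha u}\,du = \tfrac{1}{2\alpha}$, a dominated-convergence argument (using that the almost surely convergent process $e^{-\alpha s}X_s$ is almost surely bounded in $s$, so that $\big(e^{-\alpha(t-u)}X_{t-u}\big)^2$ is uniformly dominated by an integrable constant multiple of $e^{-2\alpha u}$) gives the limit $\tilde\xi_\infty^2\int_0^\infty e^{-2\alpha u}\,du = \tfrac{\tilde\xi_\infty^2}{2\alpha}$. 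The point to be careful about here is that the convergence $e^{-\alpha s}X_s\to\tilde\xi_\infty$ holds only almost surely rather than uniformly, so the domination must be justified pathwise using the boundedness of a convergent continuous process on $[0,\infty)$.
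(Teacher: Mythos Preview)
Your argument is correct. The first part coincides with the paper's proof: both reduce to the almost sure convergence $\int_0^t e^{-\alpha s}\,dB^H_s\to\xi_\infty$, which the paper simply cites from \cite{BESO} while you supply an integration-by-parts justification.

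For the second part you take a genuinely different route. The paper writes $\int_0^t X_s^2\,ds = \int_0^t e^{2\alpha s}\tilde\xi_s^2\,ds$ and applies l'H\^opital's rule in the $\infty/\infty$ form to the ratio with denominator $e^{2\alpha t}$; to justify the $\infty/\infty$ hypothesis it first establishes the lower bound $\int_0^t X_s^2\,ds\geq \tfrac{t}{2}e^{\alpha t}\inf_{t/2\leq s\leq t}\tilde\xi_s^2$ and invokes the non-degeneracy of the Gaussian law of $\tilde\xi_\infty$ to conclude that this lower bound diverges almost surely. Your dominated convergence argument sidesteps this entirely: once you know $\tilde\xi_s=e^{-\alpha s}X_s$ is continuous and almost surely convergent, pathwise boundedness gives an integrable majorant $M^2(\omega)e^{-2\alpha u}$ for the integrand on $[0,\infty)$, and DCT yields the limit directly. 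Your approach is slightly more elementary in that it needs neither the lower bound nor the distributional information about $\tilde\xi_\infty$; the paper's l'H\^opital computation is shorter once those ingredients are in place. Both are valid.
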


\begin{proof}
The first statement follows directly from the fact that $\xi_t\to\xi_\infty$ a.s. (shown in Lemma 2, \cite{BESO}):
\[e^{-\alpha t}X_t= x_0 +\int_0^t e^{-\alpha s} L(s) ds + \sigma \xi_t\to x_0 + \int_0^\infty  e^{-\alpha s} L(s) ds + \sigma \xi_\infty \text{ a.s.}\]
For the second statement we start by noticing that $\tilde{\xi}_t$ is a process with a.s. continuous paths. We have for each $t\in\mathbb R^+$:
\[\int_0^t X_s^2 ds \geq \int_{t\slash 2}^t e^{2\alpha s}\tilde{\xi}_s^2 ds \geq \frac{t}{2}e^{\alpha t}\inf_{\frac{t}{2}\leq s\leq t}\tilde{\xi}_s^2.\]
Since $\tilde{\xi}_t \to \tilde{\xi}_{\infty}$ a.s., it follows that
\[\lim_{t\to\infty } \inf_{\frac{t}{2}\leq s\leq t}\tilde{\xi}_s^2 = \tilde{\xi}_{\infty}^2 \text{ a.s.}\]
From the fact that $\xi_\infty \sim N(0,\, \frac{H\Gamma (2H)}{\alpha^{2H}})$ (shown in \cite{BESO}) we can conclude that $\tilde{\xi}_{\infty}$ also follows a (non-degenerate) normal distribution, and hence, $\lim_{t\to\infty} \int_0^t X_s^2 ds = \infty$ a.s. Therefore, we get by l'H\^opital's rule
\[\lim_{t\to\infty} \frac{ \int_0^t  e^{2\alpha s}\tilde{\xi}_s^2 ds}{e^{2\alpha t}}=\lim_{t\to\infty}\frac{\tilde{\xi}_t^2}{2\alpha}=\frac{\tilde{\xi}_{\infty}^2}{2\alpha}.\]
\end{proof}

\begin{lemma}
\label{l:2}
For $i\in \{1,\dots , p \}$ the following statements hold almost surely:
\begin{enumerate}
\item[(1)] $\frac{1}{n}\int_0^n \phi_i(t)dB^H_t\to 0$,
\item[(2)] $e^{-\alpha n}\Lambda_{ni}\sqrt{n}\to 0$,
\item[(3)] $nD_n e^{-2\alpha n}\to \frac{\tilde{\xi}_{\infty}^2}{2\alpha}$,
\item[(4)] $e^{-\alpha n}\frac{1}{\sqrt{n}}\int_0^n X_t dB^H_t\to 0$.
\end{enumerate}
\end{lemma}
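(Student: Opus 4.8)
The plan is to handle the four statements in increasing order of difficulty, reducing (2)--(4) to Lemma \ref{l:1} together with a few Gaussian tail estimates, so that (1) and (4) are the only parts needing genuine probabilistic input.

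For (1), I would note that $Z_n:=\int_0^n\phi_i(t)\,dB^H_t$ is a Wiener integral of a bounded deterministic integrand, hence a centred Gaussian variable, with
\[\E[Z_n^2]=H(2H-1)\int_0^n\int_0^n\phi_i(s)\phi_i(t)|s-t|^{2H-2}\,ds\,dt.\]
Using $|\phi_i|\le C$ and rescaling the double integral gives $\Var(Z_n)=O(n^{2H})$. For fixed $\varepsilon>0$ the Gaussian tail bound then yields $\mathbb P(|Z_n|>\varepsilon n)\le 2\exp(-c\varepsilon^2 n^{2-2H})$, whose sum over integers converges because $2-2H>0$; Borel--Cantelli gives $Z_n/n\to0$ a.s.\ along integers, and the oscillation $\sup_{t\in[n,n+1]}|Z_t-Z_n|$, being the supremum of a Gaussian family with $O(1)$ variance, is $O(\sqrt{\log n})$ a.s., so dividing by $n$ extends the convergence to the continuous parameter.

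For (2) and (3) I would exploit that Lemma \ref{l:1} provides not only $\tilde\xi_t\to\tilde\xi_\infty$ but, since $\tilde\xi$ has continuous convergent paths, the a.s.\ bound $\sup_{t\ge0}|\tilde\xi_t|\le M<\infty$. Writing $X_t=e^{\alpha t}\tilde\xi_t$, the exponential weight concentrates everything near $t=n$: bounding $|\phi_i(t)e^{\alpha t}\tilde\xi_t|\le MCe^{\alpha t}$ gives $e^{-\alpha n}\int_0^n\phi_i(t)e^{\alpha t}\tilde\xi_t\,dt=O(1)$, so
\[e^{-\alpha n}\sqrt n\,\Lambda_{ni}=\frac{e^{-\alpha n}}{\sqrt n}\int_0^n\phi_i(t)e^{\alpha t}\tilde\xi_t\,dt=O(n^{-1/2})\to0,\]
which is (2). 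Statement (3) is then immediate, since $nD_ne^{-2\alpha n}=e^{-2\alpha n}\int_0^n X_t^2\,dt-\sum_i\big(e^{-\alpha n}\sqrt n\,\Lambda_{ni}\big)^2$, where the first term converges to $\tilde\xi_\infty^2/(2\alpha)$ by Lemma \ref{l:1} and each bracketed term vanishes by (2).

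The real work is (4). As $H>1/2$, the pathwise (Young) integral obeys the ordinary chain rule with no quadratic-variation correction, so applying it to $X_t^2$ and rearranging yields
\[\sigma\int_0^n X_t\,dB^H_t=\tfrac12\big(X_n^2-x_0^2\big)-\int_0^n L(t)X_t\,dt-\alpha\int_0^n X_t^2\,dt.\]
Multiplying by $e^{-\alpha n}$ and invoking Lemma \ref{l:1}, the terms $\tfrac12 e^{-\alpha n}X_n^2=\tfrac12 e^{\alpha n}\tilde\xi_n^2$ and $\alpha e^{-\alpha n}\int_0^n X_t^2\,dt\sim e^{\alpha n}\tilde\xi_\infty^2/2$ are each of order $e^{\alpha n}$ and cancel to leading order, while $e^{-\alpha n}\int_0^n L(t)X_t\,dt=O(1)$ as in (2); hence after dividing by $\sqrt n$ only $e^{\alpha n}H_n/\sqrt n$ survives, with $H_n:=\tfrac12\tilde\xi_n^2-\alpha e^{-2\alpha n}\int_0^n X_t^2\,dt\to0$. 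The crux, and the main obstacle, is to quantify this cancellation, i.e.\ to show $H_n=o(e^{-\alpha n}\sqrt n)$ a.s. To this end I would use $\tilde\xi_\infty-\tilde\xi_t=\int_t^\infty e^{-\alpha s}L(s)\,ds+\sigma\int_t^\infty e^{-\alpha s}\,dB^H_s$, bound the deterministic part by $\tfrac{\|L\|_\infty}{\alpha}e^{-\alpha t}$, and control the Gaussian part through the fact that $\Var\big(e^{\alpha t}\int_t^\infty e^{-\alpha s}\,dB^H_s\big)=c_H$ is constant in $t$, so a Borel--Cantelli argument gives $|\tilde\xi_t-\tilde\xi_\infty|=O(e^{-\alpha t}\sqrt{\log t})$ a.s. Feeding this rate into the decomposition $H_n=\tfrac12(\tilde\xi_n^2-\tilde\xi_\infty^2)+O(e^{-2\alpha n})-\alpha e^{-2\alpha n}\int_0^n e^{2\alpha t}(\tilde\xi_t^2-\tilde\xi_\infty^2)\,dt$ yields $H_n=O(e^{-\alpha n}\sqrt{\log n})$, whence $e^{\alpha n}H_n/\sqrt n=O(\sqrt{(\log n)/n})\to0$. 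The delicate point throughout is upgrading the constant-variance Gaussian bound to an almost-sure logarithmic rate, uniformly in the continuous time parameter, which is precisely where the tail integral $\int_t^\infty e^{-\alpha s}\,dB^H_s$ must be handled with care.
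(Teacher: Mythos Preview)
Your argument is correct, but it diverges from the paper's in parts (2) and especially (4).

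For (2), the paper splits $\Lambda_{ni}$ into a deterministic and a centred Gaussian summand, bounds the deterministic piece directly, and shows the Gaussian piece has variance $\lesssim 1/n$ via the isometry, then invokes the Gaussian Borel--Cantelli lemma. Your pathwise shortcut---using the a.s.\ finiteness of $\sup_t|\tilde\xi_t|$ inherited from Lemma~\ref{l:1} to get the crude bound $e^{-\alpha n}\int_0^n|\phi_i(t)|e^{\alpha t}|\tilde\xi_t|\,dt=O(1)$---is cleaner and perfectly adequate here; note that it still yields the $O(n^{-1/2})$ rate needed later in Corollary~\ref{cor:1}.

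For (4) the two approaches are genuinely different. The paper expands $X_t$ explicitly, obtaining three terms $A$, $B$, $C$; the first two are centred Gaussian with variance $\lesssim 1/n$, and the double integral $C$ is rewritten via the Young--Skorokhod relation from \cite{BESO} into three pieces handled by known $L^2$ estimates. Your route through the Young chain rule for $X_t^2$ avoids Skorokhod integrals entirely but forces you to quantify the cancellation between $\tfrac12 e^{\alpha n}\tilde\xi_n^2$ and $\alpha e^{-\alpha n}\int_0^n X_t^2\,dt$, which in turn requires the a.s.\ rate $|\tilde\xi_t-\tilde\xi_\infty|=O(e^{-\alpha t}\sqrt{\log t})$. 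This rate is correct---the constant-variance observation for $e^{\alpha t}\int_t^\infty e^{-\alpha s}\,dB^H_s$ plus Borel--Cantelli handles integer times, and stationarity of fBm increments together with a Borell--TIS bound on $\sup_{u\in[0,1]}|\int_0^u e^{-\alpha s}\,dB^H_s|$ handles the interpolation you flag as delicate---but it is extra work the paper sidesteps. In exchange, your argument is completely pathwise and yields the stronger information that $e^{-\alpha n}\int_0^n X_t\,dB^H_t=O(\sqrt{\log n})$ a.s., not merely $o(\sqrt n)$.
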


\begin{proof}
\begin{enumerate}
\item[(1)] This is an application of Lemma \ref{l:4}: We have
\begin{align*}
\E [(\frac{1}{n}\int_0^n \phi_i(t)dB^H_t)^2]=\frac{1}{n^2}\int_0^n \int_0^n \phi_i (u)\phi_i (v)|u-v|^{2H-2}du dv\lesssim n^{2H-2},
\end{align*}
and the result follows for $k=2$.
\item[(2)]We write $\Lambda_{ni}$ as a sum of a deterministic and of a centred Gaussian part and show convergence separately:
\begin{align*}
e^{-\alpha n}\Lambda_{ni}=\frac{1}{n}e^{-\alpha n}\int_0^n \phi_i (t)&(e^{\alpha t}x_0 +e^{\alpha t}\int_0^t e^{-\alpha s}L(s)ds)dt\\
&+\frac{1}{n}e^{-\alpha n}\int_0^n \phi_i(t)\sigma e^{\alpha t}\xi_t dt=:A+B,
\end{align*}
where $\xi_t=\int_0^t e^{-\alpha r}dB^H_r$.
For the deterministic part we write
\begin{align*}
\sqrt{n}A = \frac{1}{\sqrt{n}}e^{-\alpha n}\int_0^n \phi_i (t)e^{\alpha t}x_0 dt &+ \frac{1}{\sqrt{n}}e^{-\alpha n}\int_0^n e^{\alpha t}\int_0^t e^{-\alpha s}L(s)dsdt\\
&=:A_1+A_2,
\end{align*}
and we can bound the two summands as follows:
\begin{align*}
|A_1|\lesssim \frac{1}{\sqrt{n}}e^{-\alpha n}\int_0^n e^{\alpha t}dt=\frac{1}{\sqrt{n}}-\frac{1}{\sqrt{n}}e^{-\alpha n}\to 0
\end{align*}
as well as
\begin{align*}
|A_2|\lesssim \frac{1}{\sqrt{n}}e^{-\alpha n}\int_0^n e^{\alpha t}\int_0^t e^{-\alpha s}ds dt = \frac{1}{\sqrt{n}}e^{-\alpha n}\int_0^n e^{\alpha t}dt - \frac{1}{\sqrt{n}}e^{-\alpha n}\to 0.
\end{align*}
We have shown convergence for the deterministic part and now we will calculate the second moment of the Gaussian part in order to apply Lemma \ref{l:4}.
\begin{align*}
\E [(\sqrt{n}B)^2]=&\E [(\frac{1}{\sqrt{n}}e^{-\alpha n}\int_0^n \phi_i(t)\sigma e^{\alpha t}\xi_t dt)^2]\\
=&\frac{1}{n}e^{-2\alpha n}\int_0^n\int_0^n \phi_i(t)\phi_i (s)\sigma^2 e^{\alpha t}e^{\alpha s}\E [\xi_t\xi_s]ds dt
\end{align*}
and we get by treating the stochastic integrals as Skorokhod integrals
\begin{align*}
\E [\xi_t\xi_s] = \int_0^t \int_0^s e^{-\alpha r}e^{-\alpha v}|r-v|^{2H-2}dvdr.
\end{align*}
In total, we obtain
\begin{align*}
&\E [(\sqrt{n}B)^2]\\
&=\frac{1}{n}e^{-2\alpha n}\sigma^2\int_0^n\int_0^n \phi_i(t)\phi_i (s)  \int_0^t \int_0^s e^{\alpha s-\alpha r}e^{\alpha t-\alpha v}|r-v|^{2H-2}dvdrdsdt\\
&=\frac{1}{n}e^{-2\alpha n}\sigma^2\int_0^n\int_0^n |r-v|^{2H-2}\int_v^n \int_r^n \phi_i(t)\phi_i (s)  e^{\alpha s-\alpha r}e^{\alpha t-\alpha v} ds dt dv dr\\
&\lesssim \frac{1}{\alpha^2}\frac{1}{n}e^{-2\alpha n}\sigma^2\int_0^n\int_0^n |r-v|^{2H-2} (e^{\alpha n-\alpha v}-1)(e^{\alpha n-\alpha r}-1)drdv\\
&\simeq \frac{1}{n}\int_0^n\int_0^n |r-v|^{2H-2} (e^{-\alpha v}-e^{-\alpha n})(e^{-\alpha r}-e^{-\alpha n})drdv\\
&\leq \frac{1}{n}\int_0^n\int_0^n |r-v|^{2H-2} e^{-\alpha v}e^{-\alpha r}drdv\lesssim \frac{1}{n},
\end{align*}
because the last integral is bounded (this was shown in \cite{HNu}). Lemma \ref{l:4} yields almost sure convergence to zero and hence the desired result.
\item[(3)] This follows from the previous result and Lemma \ref{l:1}:
\begin{align*}
D_nne^{-2\alpha n}=e^{-2\alpha n}\int_0^n X_t^2 dt- \underbrace{\sum_{i=1}^p (\sqrt{n}\Lambda_{ni}e^{-\alpha n})^2}_{\to 0\text{ by }(2)}\to \frac{\tilde{\xi}_{\infty}^2}{2\alpha}.
\end{align*}
\item[(4)] We plug in the expression $X_t$ and get
\begin{align*}
e^{-\alpha n}&\frac{1}{\sqrt{n}}\int_0^n X_t dB^H_t=e^{-\alpha n}\frac{1}{\sqrt{n}}\int_0^n e^{\alpha t}x_0 dB^H_t\\
&+ e^{-\alpha n}\frac{1}{\sqrt{n}}\int_0^n e^{\alpha t} \int_0^t e^{-\alpha s}L(s)ds dB^H_t\\
&+ e^{-\alpha n}\frac{1}{\sqrt{n}}\int_0^n \sigma e^{\alpha t}\int_0^t e^{-\alpha s}dB^H_s dB^H_t=:A+B+C.
\end{align*}
The integral in $A$ can again be interpreted as a Skorokhod integral (yielding a centred Gaussian random variable) which allows us the computation of its $L^2$ norm:
\begin{align*}
\E [A^2] &= x_0^2\frac{1}{n}e^{-2\alpha n}\int_0^n\int_0^n e^{\alpha u}e^{\alpha v}|u-v|^{2H-2}du dv\\
&=x_0^2\frac{1}{n}\underbrace{\int_0^n\int_0^n e^{-\alpha (n-u)}e^{-\alpha (n-v)}|u-v|^{2H-2}du dv}_{=:I_n}\lesssim \frac{1}{n},
\end{align*}
because $I_n$ is bounded as was shown in \cite{HNu}. Lemma \ref{l:4} implies almost sure convergence.
For $B$, which is also a centred Gaussian sequence, the calculation is similar:
\begin{align*}
&\E [B^2]\\
&=\frac{1}{n}e^{-2\alpha n} \int_0^n\int_0^n e^{\alpha u} \int_0^u e^{-\alpha s}L(s)ds e^{\alpha v}\int_0^v e^{-\alpha r}L(r)dr|u-v|^{2H-2}du dv\\
&\lesssim \frac{1}{n}e^{-2\alpha n} \int_0^n\int_0^n e^{\alpha u} (1-e^{-\alpha u}) e^{\alpha v}(1-e^{-\alpha v})|u-v|^{2H-2}du dv\\
&=\frac{1}{n}e^{-2\alpha n} \int_0^n\int_0^n (e^{\alpha u}-1)(e^{\alpha v}-1)|u-v|^{2H-2}du dv\leq \frac{1}{n}I_n\lesssim \frac{1}{n},
\end{align*}
and the almost sure convergence follows.
For $C$ we use Lemma 4 from \cite{BESO} to decompose the double integral:
\begin{align*}
C=e^{-\alpha n}&\frac{1}{\sqrt{n}}\sigma (\int_0^n  e^{\alpha s}dB^H_s \int_0^t e^{-\alpha r}dB^H_r - \int_0^n  e^{-\alpha s}\int_0^s e^{\alpha r}\delta B^H_r \delta B^H_s\\
& - H(2H-1)\int_0^n  e^{-\alpha s}\int_0^s e^{\alpha r} |s-r|^{2H-2}dr ds)=:C_1-C_2-C_3,
\end{align*}
where $\delta$ stands for the Skorokhod integral. We show almost sure convergence for the three summands:
\begin{align*}
C_1=\sigma e^{-\alpha n}\frac{1}{\sqrt{n}}\int_0^n  e^{\alpha s}dB^H_s \xi_t,
\end{align*}
and hence we know from \cite{BESO} that $\xi_t\to\xi_{\infty}\sim N(0,\,\frac{H\Gamma (2H)}{\alpha^{2H}})$ a.s., it is enough to show that $e^{-\alpha n}\frac{1}{\sqrt{n}}\int_0^n  e^{\alpha s}dB^H_s \to 0$ almost surely. Since it is a centred Gaussian sequence, we again rely on Lemma \ref{l:4} and compute the respective variances:
\begin{align*}
\E [(e^{-\alpha n}\frac{1}{\sqrt{n}}&\int_0^n  e^{\alpha s}dB^H_s)^2]\\
&\simeq e^{-2\alpha n}\frac{1}{n}\int_0^n\int_0^n e^{\alpha s}e^{\alpha r}|s-r|^{2H-2}dsdr = \frac{1}{n}I_n\lesssim \frac{1}{n}.
\end{align*}
In order to treat $C_2$ note that by Lemma 7 in \cite{BESO}
\[ Y_n:= e^{-\frac{\alpha n}{2}}\int_0^n  e^{-\alpha s}\int_0^s e^{\alpha r}\delta B^H_r \delta B^H_s\stackrel{L^2}{\to} 0,\]
and consequently $\E [Y_n^2]$ is bounded. Since, moreover, $Y_n$ is centred (as it is a Skorokhod integral), Markov inequality helps achieve the summability of tails:
\begin{align*}
\sum_{n=1}^{\infty} P(|C_2(n)|\geq \varepsilon)= \sum_{n=1}^{\infty} & P(|\frac{1}{\sqrt{n}}e^{-\frac{\alpha n}{2}} Y_n|\geq \varepsilon)\\
&\leq \sum_{n=1}^{\infty} \frac{\E [Y_n^2]}{\varepsilon^2 n e^{\frac{\alpha n}{2}} }\lesssim \sum_{n=1}^{\infty} \frac{1}{n e^{\frac{\alpha n}{2}} } <\infty,
\end{align*}
and almost sure convergence to zero follows.
Finally, Lemma 7 in \cite{BESO} ensures that $C_3 e^{\frac{\alpha n}{2}}\sqrt{n}$ converges to zero, which implies that also $C_3$ itself goes to zero as $n$ tends to infinity. This completes the proof  of the initial claim.
\end{enumerate}
\end{proof}

\begin{corollary} \label{cor:1}
For $\beta <\frac{1}{2}$ also $n^{\beta} e^{-\alpha n}\Lambda_{ni}\sqrt{n}\to 0 $ as well as  $n^{\beta} e^{-\alpha n}\frac{1}{\sqrt{n}}\int_0^n X_t dB^H_t\to 0$ almost surely.
\end{corollary}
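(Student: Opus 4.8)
The plan is to revisit the two convergence arguments inside Lemma \ref{l:2}, namely parts (2) and (4), and to verify that inserting the extra polynomial factor $n^{\beta}$ with $\beta<\frac{1}{2}$ does not spoil any of the estimates used there. In each case the quantity decomposes into a purely deterministic piece decaying like a negative power of $n$ and one or more pieces living in a fixed Wiener chaos, to which Lemma \ref{l:4} (or a direct Markov--Borel--Cantelli argument) applies. The key observation is that all the $L^2$ bounds obtained in Lemma \ref{l:2}(2) and (4) were of order $n^{-1}$ (or even exponentially small), so multiplying by $n^{2\beta}$ only shifts the exponent to $2\beta-1<0$, which remains strictly negative; the arguments then carry over essentially verbatim.

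Concretely, for the first assertion I would write $n^{\beta}e^{-\alpha n}\Lambda_{ni}\sqrt{n}=n^{\beta}\sqrt{n}\,A+n^{\beta}\sqrt{n}\,B$ exactly as in the proof of Lemma \ref{l:2}(2). The deterministic term $\sqrt{n}\,A$ was shown to be $O(n^{-1/2})$, so $n^{\beta}\sqrt{n}\,A=O(n^{\beta-1/2})\to 0$ since $\beta<\frac{1}{2}$. The Gaussian term satisfies $\E[(n^{\beta}\sqrt{n}\,B)^2]=n^{2\beta}\,\E[(\sqrt{n}\,B)^2]\lesssim n^{2\beta-1}$, and because $2\beta-1<0$ this is again a polynomially decaying $L^2$ bound for a first-chaos sequence, so Lemma \ref{l:4} yields almost sure convergence to zero.

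For the second assertion I would decompose $n^{\beta}e^{-\alpha n}\frac{1}{\sqrt{n}}\int_0^n X_t\,dB^H_t=n^{\beta}(A+B+C)$ using the same $A$, $B$, $C$ as in Lemma \ref{l:2}(4). The terms $A$ and $B$ are centred Gaussian with $\E[A^2],\E[B^2]\lesssim n^{-1}$, so $\E[(n^{\beta}A)^2],\E[(n^{\beta}B)^2]\lesssim n^{2\beta-1}$ and Lemma \ref{l:4} applies. For $C=C_1-C_2-C_3$: in $C_1$ the factor $\xi_n$ converges a.s.\ (hence is a.s.\ bounded), while the remaining Gaussian factor has $L^2$ norm $\lesssim n^{-1/2}$, so after multiplication by $n^{\beta}$ its square has expectation $\lesssim n^{2\beta-1}$ and Lemma \ref{l:4} again gives a.s.\ convergence; for $C_2=\frac{1}{\sqrt{n}}e^{-\alpha n/2}Y_n$ with $\sup_n\E[Y_n^2]<\infty$, the Markov--Borel--Cantelli estimate becomes $\sum_n P(|n^{\beta}C_2|\geq\varepsilon)\lesssim \sum_n n^{2\beta-1}e^{-\alpha n}<\infty$, the exponential factor absorbing the extra $n^{2\beta}$; and $C_3$ is deterministic with $C_3 e^{\alpha n/2}\sqrt{n}\to 0$, so $n^{\beta}C_3\to 0$ trivially.

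The main point to watch — rather than a genuine obstacle — is that the strengthening succeeds precisely because $\beta<\frac{1}{2}$: the leading chaos contributions have $L^2$ norm of exact order $n^{-1/2}$, so the surviving exponent after inserting $n^{\beta}$ is $2\beta-1$, which stays strictly negative only under this constraint (pushing to $\beta=\frac{1}{2}$ would already break it). For the exponentially small terms ($C_2$, $C_3$, and the deterministic parts of $\Lambda_{ni}$) no care is needed, since a polynomial factor is swallowed by the exponential decay. Thus Corollary \ref{cor:1} follows by re-running the estimates of Lemma \ref{l:2}(2) and (4) with the bookkeeping exponent shifted by $2\beta$.
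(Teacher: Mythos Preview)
Your proposal is correct and follows exactly the same approach as the paper: you re-run the decompositions from Lemma \ref{l:2}(2) and (4), observe that the deterministic parts are $O(n^{\beta-1/2})$, that the Gaussian pieces acquire $L^2$ bounds of order $n^{2\beta-1}$ so that Lemma \ref{l:4} still applies, and that the exponentially decaying terms $C_2$, $C_3$ absorb the polynomial factor without any change to the argument. Your write-up is in fact more detailed than the paper's own proof, which dispatches the corollary in a few lines by pointing to the same bookkeeping.
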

\begin{proof}
The deterministic part of the sequence $n^{\beta} e^{-\alpha n}\Lambda_{ni}\sqrt{n}$ (i.e. $n^{\beta}\sqrt{n} A$, cf. the notation from the proof of (2) in \ref{l:2}) is bounded up to a constant by $n^{\beta-0.5}$ and the variance of the random part by $n^{2\beta -1}$. This yields polynomial rates of convergence, thus Lemma \ref{l:4} still can be applied and we obtain almost sure convergence.
The same argument holds for the second convergence result. Lemma \ref{l:4} can still be applied for $A$, $B$ and $C_1$ (from the proof of (4) in \ref{l:2}), and for $C_2$ and $C_3$ the additional factor $n^\beta$ changes nothing in the structure of the arguments, so the proofs can be followed verbatim.
\end{proof}
\section{Asymptotic properties of $\hat{\vartheta}$}\label{sec:Asympt}
In this section we examine the asymptotic properties of our estimator. First we prove strong consistency, than we provide a second order limit result, which shows the substantially different behaviour of the parameters of the periodic function and the mean reverting parameter, namely we get both different limiting distributions and different rates. Finally, we will show that for basis functions $\phi$ with $\int_0^1 \phi(s) ds=0$ the rate in the central limit theorem improves to $\sqrt{n}$ independent of $H$. In this case we provide two representations of the asymptotic variance, one involving sums over the Riemann zeta function, the other an integral representation.
\begin{theorem}\label{th:1}
$\hat{\vartheta}$ is strongly consistent, i.e.
\begin{enumerate}
\item [(1)] for $i\in \{1,\dots , p\}$
\begin{align*}
\hat{\mu_i}-\mu_i & = \sigma \frac{1}{n}(\int_0^n \phi_i (t)dB^H_t\\
&+ \frac{1}{D_n}\sum_{j=1}^p \Lambda_{ni}\Lambda_{nj}\int_0^n \phi_j (t)dB^H_t - \frac{1}{D_n}\Lambda_{ni}\int_0^n X_t dB^H_t)\to 0,
\end{align*}
\item[(2)] $\hat{\alpha}-\alpha= -\sigma \frac{1}{nD_n}(\sum_{i=1}^p \Lambda_{ni}\int_0^n \phi_i (t)dB^H_t - \int_0^n X_t dB^H_t)\to 0$,
\end{enumerate}
both almost surely.
\end{theorem}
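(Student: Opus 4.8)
The plan is to read off the two displayed identities from a direct matrix computation and then to reduce every term to a product of sequences whose almost sure limits are already recorded in Lemma \ref{l:2}.

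First I would establish the explicit formulas. Starting from the representation $\hat{\vartheta}_n=\vartheta+\sigma Q_n^{-1}R_n$ and inserting the closed form of $Q_n^{-1}$, it suffices to read off the first $p$ coordinates and the last coordinate of the product $\sigma Q_n^{-1}R_n$. The top-left block $\frac1n(E_p+\gamma_n\Lambda_n\Lambda_n^t)$ applied to $\left(\int_0^n\phi_j(t)\,dB^H_t\right)_{j}$ produces the first two terms of (1), the top-right block $-\frac{\gamma_n}{n}\Lambda_n$ acting on $\int_0^n X_t\,dB^H_t$ produces the third term, and the last row $\frac1n(-\gamma_n\Lambda_n^t,\gamma_n)$ gives the expression in (2). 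No convergence enters this step; it is pure linear algebra with $\gamma_n=1/D_n$.

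For the convergence I would introduce the normalised quantities whose limits are supplied by Lemma \ref{l:2}, namely
\[
s_{ni}:=\frac1n\int_0^n\phi_i(t)\,dB^H_t,\quad u_{ni}:=e^{-\alpha n}\Lambda_{ni}\sqrt n,\quad w_n:=e^{-\alpha n}\frac1{\sqrt n}\int_0^n X_t\,dB^H_t,\quad v_n:=nD_n e^{-2\alpha n},
\]
so that $s_{ni}\to0$, $u_{ni}\to0$, $w_n\to0$ and $v_n\to\tilde{\xi}_{\infty}^2/(2\alpha)$ almost surely. The crucial structural point is that $\lim_n v_n$ is strictly positive a.s., because $\tilde{\xi}_{\infty}$ is a non-degenerate Gaussian variable (as in the proof of Lemma \ref{l:1}); hence $1/v_n$ is eventually bounded along almost every path. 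Substituting $\Lambda_{ni}=u_{ni}e^{\alpha n}/\sqrt n$, $\int_0^n\phi_j(t)\,dB^H_t=n\,s_{nj}$, $\int_0^n X_t\,dB^H_t=w_n\sqrt n\,e^{\alpha n}$ and $nD_n=v_n e^{2\alpha n}$ into the identities, one verifies that every exponential factor $e^{\pm\alpha n}$ and every power of $n$ cancels. Explicitly, the three summands of (1) collapse to $\sigma s_{ni}$, $\sigma u_{ni}u_{nj}s_{nj}/v_n$ and $-\sigma u_{ni}w_n/v_n$, each a product of null sequences divided by a sequence bounded away from zero, hence tending to $0$ a.s.; and the two contributions in (2) collapse to $\sigma u_{ni}s_{ni}\sqrt n\,e^{-\alpha n}/v_n$ and $\sigma w_n\sqrt n\,e^{-\alpha n}/v_n$, where the leftover factor $\sqrt n\,e^{-\alpha n}\to0$ supplies the decay, so both tend to $0$ a.s.

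The genuine content of the statement is carried entirely by Lemma \ref{l:2}; once its four limits are in hand the theorem is a matter of this bookkeeping. The one step I would not treat as routine is the positivity of $\lim_n v_n$: it is what guarantees that dividing by $D_n$ preserves convergence, and it rests on the non-degeneracy of $\tilde{\xi}_{\infty}$ established via $\xi_\infty\sim N(0,H\Gamma(2H)/\alpha^{2H})$ in Lemma \ref{l:1}. I would state this explicitly rather than leave it implicit.
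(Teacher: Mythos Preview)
Your proof is correct and follows essentially the same route as the paper: both arguments rewrite each summand as a product of the normalised quantities from Lemma~\ref{l:2} (your $s_{ni},u_{ni},w_n,v_n$ are precisely the paper's factors $\frac{1}{n}\int_0^n\phi_i\,dB^H_t$, $e^{-\alpha n}\Lambda_{ni}\sqrt n$, $e^{-\alpha n}n^{-1/2}\int_0^n X_t\,dB^H_t$, $nD_ne^{-2\alpha n}$), check that the exponential and polynomial weights cancel, and conclude from the four limits in Lemma~\ref{l:2} together with $\tilde{\xi}_\infty\neq 0$ a.s. Your explicit emphasis on the positivity of $\lim_n v_n$ is a welcome clarification but not a departure from the paper's argument.
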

\begin{proof}
We treat each summand separately and exploit Lemma \ref{l:2}.
\begin{enumerate}
\item[(1)] Let us denote $M_1:= \frac{1}{n}\int_0^n \phi_i (t)dB^H_t$, $M_{2j}:= \frac{1}{n}\frac{1}{D_n}\Lambda_{ni}\Lambda_{nj}\int_0^n \phi_j (t)dB^H_t$, $M_3:=  \frac{1}{n}\frac{1}{D_n}\Lambda_{ni}\int_0^n X_t dB^H_t $. In order to prove the claim we have to show that each of these summands converges to zero almost surely.
For $M_1$ this was shown in Lemma \ref{l:2} (1). To see this for $M_{2j}$ we rewrite it as follows:
\begin{align*}
M_{2j}&=\frac{1}{n}\frac{1}{D_n}\Lambda_{ni}\Lambda_{nj}\int_0^n \phi_j (t)dB^H_t\\
& = \underbrace{\frac{1}{n D_ne^{-2\alpha n}}}_{\to \frac{2\alpha}{\tilde{\xi}_{\infty}^2} \text{ by }\ref{l:2} (3)} \underbrace{(e^{-\alpha n}\Lambda_{ni}\sqrt{n})}_{\to 0 \text{ by }\ref{l:2} (2)}\underbrace{(e^{-\alpha n}\Lambda_{nj}\sqrt{n})}_{\to 0 \text{ by }\ref{l:2} (2)} \underbrace{\frac{1}{n}\int_0^n \phi_j (t)dB^H_t}_{\to 0 \text{ by }\ref{l:2} (1)},
\end{align*}
and since $\tilde{\xi}_{\infty}$ is almost surely nonzero, the whole expression converges a.s. to zero.
$M_3$ can also be rewritten in a way that makes the convergence statement obvious:
\begin{align*}
M_3 &= \frac{1}{n}\frac{1}{D_n}\Lambda_{ni}\int_0^n X_t dB^H_t\\
& = \underbrace{\frac{1}{n D_ne^{-2\alpha n}}}_{\to \frac{2\alpha}{\tilde{\xi}_{\infty}^2} \text{ by }\ref{l:2} (3)}  \underbrace{(e^{-\alpha n}\Lambda_{ni}\sqrt{n})}_{\to 0 \text{ by }\ref{l:2} (2)}  \underbrace{(e^{-\alpha n}\frac{1}{\sqrt{n}}\int_0^n X_t dB^H_t)}_{\to 0 \text{ by }\ref{l:2} (4)},
\end{align*}
the claim follows with the same argument as above and completes the proof of the theorem's statement.
\item[(2)] In this case we also start by introducing a notation for each type of summands. Let us denote $A_{1i}:=\frac{1}{nD_n} \Lambda_{ni}\int_0^n \phi_i (t)dB^H_t$ and $A_2:= \frac{1}{nD_n}\int_0^n X_t dB^H_t$. For the first type of summands we write
\begin{align*}
A_{1i}&= \frac{1}{nD_n} \Lambda_{ni}\int_0^n \phi_i (t)dB^H_t\\
&= \underbrace{\frac{1}{n D_ne^{-2\alpha n}}}_{\to \frac{2\alpha}{\tilde{\xi}_{\infty}^2} \text{ by }\ref{l:2} (3)}  \underbrace{(e^{-\alpha n}\Lambda_{ni}\sqrt{n})}_{\to 0 \text{ by }\ref{l:2} (2)} \underbrace{\sqrt{n}e^{-\alpha n}}_{\to 0} \underbrace{\frac{1}{n}\int_0^n \phi_i (t)dB^H_t}_{\to 0 \text{ by }\ref{l:2} (1)}
\end{align*}
and for the second kind we obtain
\begin{align*}
A_2 &=  \frac{1}{nD_n}\int_0^n X_t dB^H_t\\
&= \underbrace{\frac{1}{n D_ne^{-2\alpha n}}}_{\to \frac{2\alpha}{\tilde{\xi}_{\infty}^2} \text{ by }\ref{l:2} (3)}  \underbrace{\sqrt{n}e^{-\alpha n}}_{\to 0} \underbrace{(e^{-\alpha n}\frac{1}{\sqrt{n}}\int_0^n X_t dB^H_t)}_{\to 0 \text{ by }\ref{l:2} (4)}.
\end{align*}
Both calculations yield almost sure convergence of the summands (again, using the argument given in (1)) and thus provide the proof for the initial claim.
\end{enumerate}
\end{proof}
The next lemma is an auxiliary result for the second order limit theorem that will be proved later.
\begin{lemma}\label{l:3}
Let $F$ be any $\sigma (B^H)$-measurable random variable such that $P(F<\infty)=1$. Then, as $n\to\infty$,
\[ (n^{-H}\delta_n (\phi_1),\dots , n^{-H}\delta_n(\phi_p),\, F,\, e^{-\alpha n}\delta_n( e^{\alpha \cdotp}))\stackrel{d}{\to}(Z_1,\dots , Z_p,\, F,\,Z), \]
where $\delta_n$ is the integral over $[0,\,n]$ with respect to $B^H$, $Z_1,\dots , Z_p$ are centred and jointly normally distributed with the covariance matrix $(\int_0^1 \phi_i(x)dx\int_0^1 \phi_j(x)dx)_{i,\,j=1,\dots , p}$ and $((Z_1,\dots , Z_p),\, F,\, Z)$ are independent. Moreover, $\Var (Z)=\frac{H\Gamma (2H)}{\alpha^{2H}}$.
\end{lemma}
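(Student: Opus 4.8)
The plan is to set $V_n := \big(n^{-H}\delta_n(\phi_1),\dots,n^{-H}\delta_n(\phi_p),\,e^{-\alpha n}\delta_n(e^{\alpha\cdot})\big)$ and to exploit that, for deterministic integrands, these are Wiener integrals, so $V_n$ is a centred Gaussian vector for every $n$ and, moreover, $V_n$ together with any element of the first Wiener chaos of $B^H$ is jointly Gaussian. The statement then splits into two tasks: (i) the marginal convergence $V_n\xrightarrow{d}(Z_1,\dots,Z_p,Z)$ with the claimed covariance, and (ii) the asymptotic independence of $V_n$ from the fixed $\sigma(B^H)$-measurable functional $F$. I would phrase (ii) as $\sigma(B^H)$-stable convergence of $V_n$ to a limit independent of $\sigma(B^H)$; by the standard characterisation of stable convergence this automatically yields $(V_n,F)\xrightarrow{d}(V,F)$ with $V\perp F$ for every $\sigma(B^H)$-measurable $F$, and since $F$ enters only through the bounded quantity $e^{i\theta F}$, the mere assumption $P(F<\infty)=1$ suffices.

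For task (i) I would compute the limiting covariance entrywise. Since $\Cov(\delta_n(f),\delta_n(g))=H(2H-1)\int_0^n\int_0^n f(u)g(v)|u-v|^{2H-2}\,du\,dv$, rescaling $u=ns,\ v=nr$ gives $\Cov(n^{-H}\delta_n(\phi_i),n^{-H}\delta_n(\phi_j))=H(2H-1)\int_0^1\int_0^1\phi_i(ns)\phi_j(nr)|s-r|^{2H-2}\,ds\,dr$. Splitting each $\phi_i$ into its mean $c_i:=\int_0^1\phi_i(x)\,dx$ and a mean-zero $1$-periodic remainder, the remainder contributes only $O(n)=o(n^{2H})$ to the unscaled variance (this is exactly the phenomenon behind the improved $\sqrt{n}$-rate announced in the introduction), so only $c_ic_j\,H(2H-1)\int_0^1\int_0^1|s-r|^{2H-2}\,ds\,dr=c_ic_j$ survives, matching the stated covariance matrix. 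For the last component the substitution $u\mapsto n-u,\ v\mapsto n-v$ turns $\Var(e^{-\alpha n}\delta_n(e^{\alpha\cdot}))$ into $H(2H-1)\int_0^n\int_0^n e^{-\alpha u}e^{-\alpha v}|u-v|^{2H-2}\,du\,dv$, which increases to $\Var(\xi_\infty)=\frac{H\Gamma(2H)}{\alpha^{2H}}$. Finally the cross term $\Cov(n^{-H}\delta_n(\phi_i),e^{-\alpha n}\delta_n(e^{\alpha\cdot}))$ is $O(n^{H-1})\to0$, because $e^{\alpha v}$ concentrates near $v=n$ while the inner $u$-integral stays $O(n^{2H-1})$; this vanishing of correlation is what makes $(Z_1,\dots,Z_p)$ and $Z$ independent in the Gaussian limit.

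Task (ii) is the crux. I would establish stable convergence by testing against the total family $Y=e^{iW}$ with $W=\sum_k\theta_k\int_0^\infty h_k(t)\,dB^H_t$ ranging over the first chaos (it is enough to take the $h_k$ compactly supported step functions, whose exponentials have dense linear span in $L^2(\sigma(B^H))$). For fixed $\lambda$ the pair $(\langle\lambda,V_n\rangle,W)$ is jointly Gaussian, so
\[\E\big[e^{i\langle\lambda,V_n\rangle}e^{iW}\big]=\exp\!\Big(-\tfrac12\Var(\langle\lambda,V_n\rangle)-\tfrac12\Var(W)-\Cov(\langle\lambda,V_n\rangle,W)\Big).\]
The covariance estimates of task (i), repeated with $h_k$ in place of the exponential and periodic weights, show $\Cov(\langle\lambda,V_n\rangle,W)\to0$ (the $\phi_i$-contributions are $O(n^{H-1})$ and the $e^{\alpha\cdot}$-contribution is $O(n^{2H-2})$, both tending to $0$ since $H<1$), while $\Var(\langle\lambda,V_n\rangle)\to\lambda^\top\Sigma\lambda$ with $\Sigma$ the limiting covariance from (i). Hence the joint characteristic function factorises in the limit into $\E[e^{i\langle\lambda,V\rangle}]\,\E[e^{iW}]$, which is precisely stable convergence of $V_n$ to a Gaussian $V$ independent of $\sigma(B^H)$; passing from the total family $\{e^{iW}\}$ to an arbitrary $F$ is then the standard consequence of stable convergence. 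The main obstacle is exactly this independence step: one must argue that the information carried by $V_n$ escapes any fixed $\sigma(B^H)$-measurable functional, and the clean way to make this rigorous — while avoiding any moment assumption on $F$ — is the stable-convergence formulation combined with the density of first-chaos exponentials, rather than a direct $L^2$ argument.
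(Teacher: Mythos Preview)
Your proposal is correct and follows essentially the same route as the paper. The paper reduces, via the approximation argument of Es-Sebaiy and Nourdin, to proving joint convergence of the Gaussian vector $(n^{-H}\delta_n(\phi_1),\dots,n^{-H}\delta_n(\phi_p),\,B^H_{s_1},\dots,B^H_{s_d},\,e^{-\alpha n}\delta_n(e^{\alpha\cdot}))$ for arbitrary finite collections $s_1,\dots,s_d$, and then checks pairwise covariances; your stable-convergence formulation tested against $e^{iW}$ with $W$ a first-chaos element built from compactly supported step functions is exactly the same computation in a different wrapper, since such $W$ are precisely finite linear combinations of $B^H_{s_i}$. The covariance bounds you quote ($O(n^{H-1})$ for the $\phi_i$--$B^H_s$ and $\phi_i$--$e^{\alpha\cdot}$ pairings, $O(n^{2H-2})$ for the $e^{\alpha\cdot}$--$B^H_s$ pairing) match those the paper either derives or imports from the cited references; the only substantive addition is that you compute the limiting covariance of $(n^{-H}\delta_n(\phi_i))_i$ by the mean/mean-zero splitting, whereas the paper simply cites this from \cite{BESV}.
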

\begin{proof}
Due to an approximation argument rigorously explained in \cite{ESN} it is enough to show that for any $d\geq 1$, $s_1,\dots , s_d\in [0,\,\infty)$
\begin{align*}
(n^{-H}\delta_n (\phi_1),\dots , n^{-H}\delta_n(\phi_p), &\, B^H_{s_1},\dots , B^H_{s_d} ,\, e^{-\alpha n}\delta_n( e^{\alpha \cdotp}))\\
& \stackrel{d}{\to}(Z_1,\dots , Z_p,\,  B^H_{s_1},\dots , B^H_{s_d},\,Z)
\end{align*}
as $n\to\infty$. The left hand side is a Gaussian vector, and hence it suffices to determine the limits of the covariances. It was shown in \cite{BESO} that the limits of $\Cov (B^H_s,\,   e^{-\alpha n}\delta_n( e^{\alpha \cdotp}))$ and $\Var ( e^{-\alpha n}\delta_n( e^{\alpha \cdotp}))$ are as claimed. Moreover, in \cite{BESV} the joint limiting distribution of $(n^{-H}\delta_n (\phi_1),\dots , n^{-H}\delta_n(\phi_p))$ was established. Therefore, we only have to show that $\Cov ( n^{-H}\delta_n (\phi_i), \, B^H_s )$ and $\Cov ( n^{-H}\delta_n (\phi_i),\,  e^{-\alpha n}\delta_n( e^{\alpha \cdotp}) )$ converge to zero.
For the first statement recall that $B^H_s = \int_0^n 1_{[0,\, s]}dB^H_t$ for any $n\geq s$. Then we can write (for $n$ large enough) due to the isometry property of the integrals:
\begin{align*}
\E &[ n^{-H}\delta_n (\phi_i) B^H_s] \lesssim n^{-H}\int_0^n \int_0^s |u-v|^{2H-2}du dv\\
& =n^{-H}\int_0^s \int_{-v}^{n-v}|z|^{2H-2}dz dv = n^{-H}\int_0^s \int_0^v z^{2H-2}dz + \int_0^{n-v}z^{2H-2} dz dv\\
& = \underbrace{n^{-H}\int_0^s v^{2H-1}dv}_{\to 0}+ n^{-H}\int_0^s (n-v)^{2H-1}dv\lesssim n^{-H}\int_{n-s}^n z^{2H-1}dz\\
& =n^{-H}(n^{2H}-(n-s)^{2H})\stackrel{\text{binom. series}}{=}n^{-H}O(n^{2H-1})=O(n^{H-1}),
\end{align*}
which tends to zero as $n$ tends to infinity.

For the second convergence refer to Proposition \ref{prop:2} in the Appendix for the estimation $\int_0^t e^{\alpha u}u^{2H-2}du \lesssim t^{2H-2}e^{\alpha t}$. We use this for our calculation:
\begin{align*}
\E &[ n^{-H}\delta_n (\phi_i) e^{-\alpha n}\delta_n( e^{\alpha \cdotp}) ] \lesssim n^{-H} e^{-\alpha n} \int_0^n \int_0^n e^{\alpha v} |u-v|^{2H-2}du dv\\
& = n^{-H} e^{-\alpha n} \int_0^n e^{\alpha u} \int_0^n e^{\alpha (v-u)}|v-u|^{2H-2}dv du\\
& =n^{-H}e^{-\alpha n} \int_0^n \left(\underbrace{\int_0^u e^{-\alpha z} z^{2H-2}dz}_{\text{bdd}}+ \int_0^{n-u} e^{\alpha z}z^{2H-2}dz\right) du\\
& \lesssim n^{-H}e^{-\alpha n} \int_0^n e^{\alpha u} e^{\alpha (n-u)}(n-u)^{2H-2}du = n^{-H}n^{2H-1}\to 0. 
\end{align*}
\end{proof}
Now we can proceed with the second order limit theorem for our estimator.
\begin{theorem}\label{th:2}
\[(n^{1-H}(\hat{\mu}_1-\mu_1,\dots ,\hat{\mu}_p-\mu_p),\, e^{\alpha n}(\hat{\alpha}-\alpha))\stackrel{d}{\to}\sigma (Z_1,\dots ,\, Z_p,\, Z_{p+1})\]
with $Z_1,\dots ,Z_p$ as above and $Z_{p+1}= 2\alpha N\slash M$ with $N\sim \No (0,\,1)$ and
\[M\sim \No \left(\frac{ \alpha^H}{\sqrt{H\Gamma(2H)}} \left(x_0+\int_0^{\infty}e^{-\alpha s}L(s)ds\right),\,1\right)\]
independent of $N$. Moreover, $(Z_1, \dots ,Z_p)$ and $Z_{p+1}$ also are independent.
\end{theorem}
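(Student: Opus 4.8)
The plan is to start from the exact error representations in Theorem \ref{th:1}, scale $\hat\mu_i-\mu_i$ by $n^{1-H}$ and $\hat\alpha-\alpha$ by $e^{\alpha n}$, and then isolate in each scaled error a single dominant term whose limit is one of the claimed coordinates, showing that every remaining summand is $o_P(1)$. Concretely, I would reduce the problem to
\[n^{1-H}(\hat\mu_i-\mu_i)=\sigma n^{-H}\int_0^n\phi_i(t)\,dB^H_t+o_P(1),\qquad e^{\alpha n}(\hat\alpha-\alpha)=\sigma\frac{1}{nD_ne^{-2\alpha n}}\,e^{-\alpha n}\int_0^n X_t\,dB^H_t+o_P(1).\]
The whole statement would then follow from a single application of Lemma \ref{l:3} (which supplies the joint limit of the $n^{-H}\delta_n(\phi_i)$, a fixed functional $F$, and $e^{-\alpha n}\delta_n(e^{\alpha\cdot})$, together with the block independence) combined with Slutsky's theorem and the continuous mapping theorem.

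The negligibility of the non-dominant terms is handled exactly as in the proof of Theorem \ref{th:1}, by factoring each summand into convergent blocks, but now tracking the extra powers of $n$ that the new scaling introduces. For the periodic coordinates the mixed term $\sigma n^{-H}D_n^{-1}\Lambda_{ni}\Lambda_{nj}\int_0^n\phi_j\,dB^H_t$ balances to $n^0$ and vanishes already by Lemma \ref{l:2}, whereas $\sigma n^{-H}D_n^{-1}\Lambda_{ni}\int_0^n X_t\,dB^H_t$ carries a surviving factor $n^{1-H}$; since $1-H<\tfrac12$, Corollary \ref{cor:1} forces $n^{1-H}e^{-\alpha n}\Lambda_{ni}\sqrt n\to0$ and kills it. In the mean-reverting coordinate the summand $\sigma e^{\alpha n}(nD_n)^{-1}\Lambda_{ni}\int_0^n\phi_i\,dB^H_t$ retains a factor $n^{H-1/2}$, and Corollary \ref{cor:1} with $\beta=H-\tfrac12<\tfrac12$ again makes it negligible. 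This leaves precisely the two dominant terms above.

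The core step is to identify the limit of $e^{-\alpha n}\int_0^n X_t\,dB^H_t$. Writing $X_t=e^{\alpha t}\tilde\xi_t$ and reusing the decomposition $e^{-\alpha n}\int_0^n X_t\,dB^H_t=\sqrt n\,(A+B+C)$ from the proof of Lemma \ref{l:2}(4), the terms $\sqrt n\,C_2$ and $\sqrt n\,C_3$ vanish (by Lemma 7 of \cite{BESO}, exactly as there), while $\sqrt n\,A=x_0\,e^{-\alpha n}\delta_n(e^{\alpha\cdot})$, the analogous reduction of $\sqrt n\,B$ to $\big(\int_0^\infty e^{-\alpha s}L(s)\,ds\big)e^{-\alpha n}\delta_n(e^{\alpha\cdot})$, and $\sqrt n\,C_1=\sigma\,\xi_n\,e^{-\alpha n}\delta_n(e^{\alpha\cdot})$ all have the form (a factor converging almost surely to a component of $\tilde\xi_\infty$) times $e^{-\alpha n}\delta_n(e^{\alpha\cdot})$. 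Hence
\[e^{-\alpha n}\int_0^n X_t\,dB^H_t=\tilde\xi_n\,e^{-\alpha n}\delta_n(e^{\alpha\cdot})+o_P(1),\qquad \tilde\xi_n\to\tilde\xi_\infty\ \text{a.s.}\]
Applying Lemma \ref{l:3} with $F=\tilde\xi_\infty$, and then replacing $\tilde\xi_n$ by $\tilde\xi_\infty$ and $nD_ne^{-2\alpha n}$ by $\tilde\xi_\infty^2/2\alpha$ (both legitimate by almost sure convergence and Slutsky), yields the joint convergence of the two dominant terms to $\big(\sigma Z_1,\dots,\sigma Z_p,\ \sigma\tfrac{2\alpha}{\tilde\xi_\infty^2}\tilde\xi_\infty Z\big)$, with $(Z_1,\dots,Z_p)$ independent of $(\tilde\xi_\infty,Z)$.

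It remains to recognize $\sigma\tfrac{2\alpha}{\tilde\xi_\infty^2}\tilde\xi_\infty Z=\sigma\,\tfrac{2\alpha Z}{\tilde\xi_\infty}$ as $\sigma Z_{p+1}$. Since $Z\sim\No(0,H\Gamma(2H)/\alpha^{2H})$ is independent of $\tilde\xi_\infty$, and $\tilde\xi_\infty$ is Gaussian with mean $x_0+\int_0^\infty e^{-\alpha s}L(s)\,ds$, rescaling numerator and denominator by $\alpha^H/\sqrt{H\Gamma(2H)}$ turns the ratio into $2\alpha N/M$ with $N\sim\No(0,1)$ independent of the Gaussian $M$ whose mean is as claimed. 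The independence of $(Z_1,\dots,Z_p)$ and $Z_{p+1}$ is inherited directly from the block independence in Lemma \ref{l:3}. The main obstacle is exactly this identification of the $\alpha$-limit: one must show that the explosive functional $e^{-\alpha n}\delta_n(e^{\alpha\cdot})$, whose limit $Z$ has the same variance as $\xi_\infty$, becomes asymptotically \emph{independent} of the fixed functional $\tilde\xi_\infty$ (which itself contains $\xi_\infty$). This non-obvious asymptotic independence is precisely the content of Lemma \ref{l:3}, and the product-formula bookkeeping that produces the clean factorization $\tilde\xi_n\,e^{-\alpha n}\delta_n(e^{\alpha\cdot})$ is the delicate part of the argument.
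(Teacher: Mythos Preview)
Your proof is correct and follows the paper's overall architecture: isolate the dominant term in each scaled error via Theorem \ref{th:1}, kill the remaining summands with Lemma \ref{l:2} and Corollary \ref{cor:1}, and then invoke Lemma \ref{l:3} with $F=\tilde\xi_\infty$ together with Slutsky. The one genuine technical divergence is in how you obtain the factorization $e^{-\alpha n}\int_0^n X_t\,dB^H_t=\tilde\xi_n\,e^{-\alpha n}\delta_n(e^{\alpha\cdot})+o_P(1)$. The paper gets this in one stroke by applying the Young change of variables (product rule) to $\tilde\xi_t$ and $\int_0^t e^{\alpha s}\,dB^H_s$, obtaining
\[
\int_0^n X_t\,dB^H_t=\tilde\xi_n\,\delta_n(e^{\alpha\cdot})-\int_0^n e^{-\alpha t}L(t)\int_0^t e^{\alpha s}\,dB^H_s\,dt-\sigma\int_0^n e^{-\alpha t}\int_0^t e^{\alpha s}\,dB^H_s\,dB^H_t,
\]
and then disposes of the two remainder terms (the first via a direct $L^2$ bound, the second by citing \cite{BESO}). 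You instead recycle the $A+B+C$ decomposition from the proof of Lemma \ref{l:2}(4), split $C$ further via Lemma~4 of \cite{BESO}, and reassemble the surviving pieces $\sqrt n\,A$, $\sqrt n\,B$, $\sqrt n\,C_1$ into $\tilde\xi_n\,e^{-\alpha n}\delta_n(e^{\alpha\cdot})$. Both routes work and are close in spirit; the paper's is slightly cleaner (one integration by parts, two remainder terms), while yours avoids introducing a new tool at the cost of more bookkeeping. One small point to fill in: the ``analogous reduction'' of $\sqrt n\,B$ to $\big(\int_0^\infty e^{-\alpha s}L(s)\,ds\big)e^{-\alpha n}\delta_n(e^{\alpha\cdot})$ is correct but not automatic --- writing $g(t)=\int_0^t e^{-\alpha s}L(s)\,ds$ and $g_\infty=g(\infty)$, you need the estimate
\[
\E\Big[\big(e^{-\alpha n}\int_0^n e^{\alpha t}(g(t)-g_\infty)\,dB^H_t\big)^2\Big]\lesssim e^{-2\alpha n}\int_0^n\int_0^n |u-v|^{2H-2}\,du\,dv\lesssim e^{-2\alpha n}n^{2H}\to 0,
\]
using $|g(t)-g_\infty|\lesssim e^{-\alpha t}$.
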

This result reflects the structure of the estimator: In the first $p$ components the additive term $\sigma \frac{1}{n}\int_0^n \phi_i(t)dB^H_t$ is the slowest summand (note that it does not include the solution process $X$ and is, therefore, not influenced by its exponential growth), which yields the same rates of convergence as in the ergodic case. The estimator for $\alpha$, however, does not contain such a term; it converges with the same exponential rate as the estimator in \cite{BESO}. The limiting distribution is also structurally similar to the case $L\equiv 0$. As mentioned in \cite{Moe}, if the estimator from \cite{BESO} is applied for an equation with a non zero starting value, the limiting distribution will also contain this value as an additional additive term in the denominator.
Moreover, due to the possibility of considering Young integrals and exploiting different techniques in the proofs our results are valid for $H\in \Big(\frac{1}{2},\,1\Big)$ in contrast to only $H\in \Big(\frac{1}{2},\,\frac{3}{4}\Big)$ for the ergodic case in \cite{DFW}.
\begin{proof}
First of all we divide the error into parts that contribute to the limit and the rest. We use the notation from the previous theorem and write: $n^{1-H}(\hat{\mu}_1-\mu_1)=\sigma (n^{1-H}M_1+n^{1-H}(\sum_{j=1}^p M_{2j}+M_3))$, $e^{\alpha n}(\hat{\alpha}-\alpha)=\sigma (-e^{\alpha n}\sum_{j=1}^p A_{1j}+ e^{\alpha n} A_2)$.
Now we will identify the rest terms by showing: $n^{1-H}(\sum_{j=1}^p M_{2j}+M_3)$ and $e^{\alpha n}\sum_{j=1}^p A_{1j}$ converge to zero almost surely. For $M_{2j}$ and $M_3$ this follows from the fact that they contain the factor $(e^{-\alpha n}\Lambda_{nj}\sqrt{n})$ which would still converge to zero if multiplied by $n^{1-H}$, since $1-H<0.5$.
$A_{1j}$ contains the factor $(e^{-\alpha n}\Lambda_{ni}\sqrt{n}) \sqrt{n}e^{-\alpha n} \frac{1}{n}\int_0^n \phi_j (t)dB^H_t$ converging  to zero almost surely. The remainder $\frac{1}{n D_ne^{-2\alpha n}}$ tends almost surely to a random variable. We write
\begin{align*}
e^{\alpha n}&(e^{-\alpha n}\Lambda_{ni}\sqrt{n}) \sqrt{n}e^{-\alpha n} \frac{1}{n}\int_0^n \phi_j (t)dB^H_t = (e^{-\alpha n}\Lambda_{ni}\sqrt{n}) \sqrt{n}\frac{1}{n}\int_0^n \phi_j (t)dB^H_t\\
& = (e^{-\alpha n}\Lambda_{ni}\sqrt{n} n^{H-0.5})\left(n^{1-H} \frac{1}{n}\int_0^n \phi_j (t)dB^H_t\right).
\end{align*}
The factor $e^{-\alpha n}\Lambda_{ni}\sqrt{n} n^{H-0.5}$ converges to zero almost surely, because $H-0.5<0.5$ and the factor $n^{1-H} \frac{1}{n}\int_0^n \phi_j (t)dB^H_t$ converges in distribution to a normal random variable (this being a consequence of the previous lemma). In total we conclude that the above expression converges to zero in distribution and therefore in probability. Thus, also the whole term $e^{\alpha n}A_{1j}$ converges to zero in probability.

The next step is to consider and rewrite $A_2$. For this we apply the change of variables formula for Young integrals to the functions $e^{-\alpha n}X_n$ and $\int_0^n e^{\alpha t}dB^H_t$. We obtain the following formula:
\begin{align*}
\int_0^n X_s dB^H_s = \int_0^n & e^{\alpha t} dB^H_t \tilde{\xi}_n -\int_0^n e^{-\alpha t}L(t)\int_0^t e^{\alpha s}dB^H_s dt\\
& - \int_0^n \sigma e^{-\alpha t}\int_0^t e^{\alpha s}dB^H_s dB^H_t=:S_1+S_2+S_3,
\end{align*}
with which we can substitute the term $\int_0^n X_s dB^H_s$ in $A_2$. We will now show that only $S_1$ contributes to the convergence statement. Since
\[e^{\alpha n}A_2 = \frac{1}{n D_ne^{-2\alpha n}} e^{-\alpha n} \int_0^n X_t dB^H_t\]
and the denominator converges almost surely, it is enough to show that $ e^{-\alpha n} (S_2+S_3)$ tend to zero in probability. For $S_3$ this has been shown in \cite{BESO}, so we only show this for $S_2$. As a Lebesgue integral of a Gaussian process $e^{-\alpha n}S_2$ is again centred Gaussian, therefore showing convergence of the second moments will suffice:
\begin{align*}
\E &\left[ \left( e^{-\alpha n} \int_0^n e^{-\alpha t}L(t)\int_0^t e^{\alpha s}dB^H_s dt \right)^2\right]\\
&\lesssim e^{-2 \alpha n}\int_0^n \int_0^n e^{-\alpha u} L(u)e^{-\alpha v} L(v)\int_0^u \int_0^v e^{\alpha s}e^{\alpha r}|s-r|^{2H-2}ds dr du dv\\
&\lesssim e^{-2 \alpha n}\int_0^n \int_0^n \int_0^u \int_0^v |s-r|^{2H-2}ds dr du dv \lesssim e^{-2 \alpha n} n^{2H+2}\to 0
\end{align*}
as $n$ tends to infinity.

For the last step of the proof we apply Lemma \ref{l:3} to $F=\tilde{\xi}_{\infty}$ and obtain
\[  (n^{-H}\delta_n (\phi_1),\dots , n^{-H}\delta_n(\phi_p),\, \tilde{\xi}_{\infty},\, e^{-\alpha n}\delta_n( e^{\alpha \cdotp}))\stackrel{d}{\to}(Z_1,\dots , Z_p,\, \tilde{\xi}_{\infty},\,Z),\]
and consequently
\[  \left(n^{-H}\delta_n (\phi_1),\dots , n^{-H}\delta_n(\phi_p),\, \frac{e^{-\alpha n}\int_0^n e^{\alpha t}dB^H_t}{\tilde{\xi}_{\infty}} \right)\stackrel{d}{\to}\left(Z_1,\dots , Z_p,\, \frac{Z}{\tilde{\xi}_{\infty}}\right),\]
where $Z\sim \sqrt{\frac{H\Gamma (2H)}{\alpha^{2H}}}\No (0,\, 1)$ and
\[\tilde{\xi}_{\infty}\sim \sqrt{\frac{H\Gamma (2H)}{\alpha^{2H}}}  \No \left(\frac{ \alpha^H}{\sqrt{H\Gamma(2H)}} \left(x_0+\int_0^{\infty}e^{-\alpha s}L(s)ds\right),\,1\right) .\]
Now note additionally that
\[(1,\dots, 1,\, \frac{\tilde{\xi}_n \tilde{\xi}_{\infty}}{n D_ne^{-2\alpha n}})\stackrel{\text{a.s.}}{\to} (1,\dots , 1,\, 2\alpha).\]
Multiplying both vectors elementwise using Slutsky's lemma yields
\[  (n^{-H}\delta_n (\phi_1),\dots , n^{-H}\delta_n(\phi_p),\, \frac{e^{-\alpha n}S_1}{n D_ne^{-2\alpha n}} )\stackrel{d}{\to}(Z_1,\dots , Z_p,\, 2\alpha \frac{Z}{\tilde{\xi}_{\infty}}),\]
which is all that we needed to show, since all the other summands converge to zero in probability. Note that we inherit the independence statement directly from Lemma \ref{l:3}.
\end{proof}

Consider the special case of a basis element $\phi_k$, $k\in \{1,\dots ,p\}$, which integrates to zero on $[0,\,1]$. The results of our theorems continue to hold, but the limiting vector $(Z_1,\dots ,Z_p)$ will have a zero entry at $Z_k$. This suggests that the convergence of the estimator's $k$th component might be of a better order than $n^{H-1}$. Indeed, one obtains the following facts.
\begin{proposition}\label{prop:1}
If $\phi_k$ for $k\in \{1,\dots ,p\}$ is such that $\int_0^1 \phi_k(t)dt=0$, then
\[\sqrt{n}(\hat{\mu}_k-\mu_k)\stackrel{d}{\to}\sigma H(2H-1)\bar{Z}_k,\]
where $\bar{Z}_k$ is a zero mean Gaussian random variable with variance 
\begin{align*}
\int_0^1 &\int_0^1 \phi_k (t)\phi_k (s)|t-s|^{2H-2}dt ds\\
&+ \sum_{l=1}^\infty 2 {{2H-2}\choose{2l}}\zeta (2l+2-2H) \int_0^1\int_0^1 \phi_k (t)\phi_k (s)(t-s)^{2l}dt ds,
\end{align*}
where $\zeta$ denotes the Riemann zeta function.
\end{proposition}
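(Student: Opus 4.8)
The plan is to isolate the single slow summand and reduce everything to one variance computation. Writing the error as in Theorem \ref{th:1}(1), $\hat\mu_k-\mu_k=\sigma(M_1+\sum_{j=1}^p M_{2j}+M_3)$ with $M_1=\frac1n\int_0^n\phi_k(t)dB^H_t$, the first step is to show that after multiplication by $\sqrt n$ only $M_1$ survives. This is the same bookkeeping as in the proof of Theorem \ref{th:2}, but now with the factor $\sqrt n=n^{1/2}$ in place of $n^{1-H}$. Rewriting $\sqrt n M_{2j}$ and $\sqrt n M_3$ in the factorised form used there, each carries $\frac{1}{nD_ne^{-2\alpha n}}\to\frac{2\alpha}{\tilde\xi_{\infty}^2}$ together with products of terms of the type $n^{H-1/2}e^{-\alpha n}\Lambda_{nk}\sqrt n$ and $n^{1-H}e^{-\alpha n}\frac{1}{\sqrt n}\int_0^n X_t dB^H_t$. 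Since $H-\tfrac12<\tfrac12$ and $1-H<\tfrac12$, Corollary \ref{cor:1} forces these to zero almost surely, so that $\sqrt n(\sum_j M_{2j}+M_3)\to0$ in probability. By Slutsky's lemma it therefore suffices to analyse $\sqrt n M_1=\frac{1}{\sqrt n}\int_0^n\phi_k(t)dB^H_t$.

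Next I would use that, being a Wiener integral of a deterministic integrand, $\sqrt n M_1$ is for every $n$ a centred Gaussian variable; hence it converges in distribution if and only if its variance converges, and the limit is the centred Gaussian with the limiting variance. By the isometry for $H>\tfrac12$ this variance is the constant $H(2H-1)$ (from the fractional covariance) times
\[V_n:=\frac{1}{n}\int_0^n\int_0^n\phi_k(u)\phi_k(v)|u-v|^{2H-2}\,du\,dv,\]
so the whole matter reduces to evaluating $\lim_n V_n$ and tracking this normalising constant at the end.

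To compute $\lim_n V_n$ I would exploit the $1$-periodicity of $\phi_k$ by \emph{folding}. Taking $n$ integer (the fractional part contributes an $O(1)$ boundary term that vanishes after division by $n$) and substituting $u=m+x$, $v=\ell+y$ with $x,y\in[0,1)$, periodicity gives
\[\int_0^n\int_0^n\phi_k(u)\phi_k(v)|u-v|^{2H-2}\,du\,dv=\sum_{d=-(n-1)}^{n-1}(n-|d|)c_d,\]
with $c_d:=\int_0^1\int_0^1\phi_k(x)\phi_k(y)|d+x-y|^{2H-2}\,dx\,dy$ satisfying $c_d=c_{-d}$, whence $V_n=\sum_{|d|<n}c_d-\frac1n\sum_{|d|<n}|d|c_d$. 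The decisive point, and the reason the rate improves from $n^{H-1}$ to $n^{-1/2}$, is the hypothesis $\int_0^1\phi_k=0$: expanding $|d+x-y|^{2H-2}=|d|^{2H-2}\sum_{l\ge0}\binom{2H-2}{l}\big(\tfrac{x-y}{d}\big)^l$ for $|d|\ge1$, the $l=0$ contribution is $|d|^{2H-2}(\int_0^1\phi_k)^2=0$, and every odd-$l$ contribution vanishes by the antisymmetry $\int_0^1\int_0^1\phi_k(x)\phi_k(y)(x-y)^{2l+1}\,dx\,dy=0$. The first surviving term is $l=2$, so $c_d=O(|d|^{2H-4})$; as $H<1$ gives $2H-3<-1$, both $\sum_d|c_d|$ and $\sum_d|d||c_d|$ converge, so $V_n\to\sum_{d\in\mathbb{Z}}c_d$ while the remainder $\frac1n\sum_{|d|<n}|d|c_d\to0$.

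Finally I would put the limit into closed form: separating $d=0$ yields $\int_0^1\int_0^1\phi_k(x)\phi_k(y)|x-y|^{2H-2}\,dx\,dy$, while for $d\ne0$ the even-order expansion combined with $\sum_{d\ge1}d^{2H-2-2l}=\zeta(2l+2-2H)$ (convergent for $l\ge1$) produces exactly the claimed $\zeta$-series for $\Var(\bar Z_k)$; carrying the constant $H(2H-1)$ through Slutsky then gives the stated limit $\sigma H(2H-1)\bar Z_k$. The main obstacle is precisely this last rearrangement. One may \emph{not} interchange the $d$-sum with the integral directly, because $\sum_{d\ne0}|d+x-y|^{2H-2}$ diverges; the finiteness of $\sum_d c_d$ rests on performing the zero-mean cancellation inside each $c_d$ before summing in $d$ (equivalently, the would-be $l=0$ coefficient $\sum_{d\ge1}d^{2H-2}$ is divergent and is removed only by $\int_0^1\phi_k=0$). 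Interchanging instead the $d$-sum with the $l$-sum must be justified by absolute convergence, which holds since for bounded $\phi_k$ one has $|\int_0^1\int_0^1\phi_k(x)\phi_k(y)(x-y)^{2l}\,dx\,dy|=O(l^{-2})$ and $|\binom{2H-2}{2l}|=O(l^{-(2H-1)})$; the small-$d$ terms, where $|(x-y)/d|$ may approach $1$, need a separate dominated-convergence argument.
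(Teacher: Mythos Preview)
Your proof is correct and follows essentially the same route as the paper: reduce to the single Gaussian term $\sqrt{n}M_1$ via Corollary~\ref{cor:1}, fold the covariance integral by periodicity into the weighted sum $\sum_d (n-|d|)c_d$, expand $|d+x-y|^{2H-2}$ as a binomial series, and use $\int_0^1\phi_k=0$ to kill the $l=0$ term so that $c_d=O(|d|^{2H-4})$ is summable. Your treatment of the interchange of the $d$- and $l$-sums is in fact more careful than the paper's, which simply appeals to dominated convergence; otherwise the arguments coincide.
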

\begin{proof}
Recall that
\[\sqrt{n}(\hat{\mu}_1-\mu_1)=\sigma \left(\sqrt{n} M_1+\sqrt{n}\left(\sum_{j=1}^p M_{2j}+M_3\right)\right)\]
with the notation from Theorem \ref{th:1}. As in Theorem \ref{th:2}, Corollary \ref{cor:1} ensures that $\sqrt{n}M_{2j}$ and $\sqrt{n}M_3$ converge to zero almost surely. Given that
\[\sigma\sqrt{n}M_1=\sigma \frac{1}{\sqrt{n}}\int_0^n \phi_k(t)dB^H_t,\]
it is enough for our claim to investigate the term \(\mathbb E \left[ \left(\frac{1}{\sqrt{n}}\int_0^n \phi_k(t)dB^H_t\right)^2\right]\).\\
With $\alpha_H= H(2H-1)$ we have by isometry and periodicity:
\begin{align}\label{eq:2}\nonumber
\frac{1}{\alpha_H} & \mathbb E \left[ \left(\frac{1}{\sqrt{n}}\int_0^n \phi_k(t)dB^H_t\right)^2\right]\\ \nonumber
&=\frac{1}{n}\int_0^n \int_0^n \phi_k (t)\phi_k (s)|t-s|^{2H-2}dt ds\\ \nonumber
&=\frac{1}{n}\sum_{i,\,j =0}^{n-1}\int_0^1\int_0^1 \phi_k(t)\phi_k(s)|t+i-s-j|^{2H-2}dt ds\\ \nonumber
&= \frac{1}{n}\int_0^1 \int_0^1 \phi_k (t)\phi_k (s) n |t-s|^{2H-2}dtds\\ \nonumber
&\qquad +\frac{1}{n} \int_0^1\int_0^1 \phi_k(t)\phi_k(s)\sum_{i>j}|t-s+i-j|^{2H-2}dtds\\ 
&\qquad +\frac{1}{n} \int_0^1\int_0^1 \phi_k(t)\phi_k(s)\sum_{j>i}|s-t+j-i|^{2H-2}dtds.
\end{align}
The first summand is constant with respect to $n$, hence, it remains to consider the second and the third one (which are equal for symmetry reasons). By rearranging the sum in the second summand, we obtain the following:
\begin{align*}
\frac{1}{n} & \int_0^1\int_0^1 \phi_k(t)\phi_k(s)\sum_{i>j}|t-s+i-j|^{2H-2}dtds\\
&=\frac{1}{n}\int_0^1 \int_0^1 \phi_k(t)\phi_k(s) \sum_{m=1}^{n-1}(n-m)|t-s+m|^{2H-2}dtds\\
&=\frac{1}{n}\int_0^1 \int_0^1 \phi_k(t)\phi_k(s) \sum_{m=1}^{n-1}(n-m)m^{2H-2}\left(\frac{t-s}{m}+1\right)^{2H-2}dtds\\
&=\frac{1}{n}\int_0^1 \int_0^1 \phi_k(t)\phi_k(s) \sum_{m=1}^{n-1}nm^{2H-2}\left(\frac{t-s}{m}+1\right)^{2H-2}dtds\\
&\qquad - \frac{1}{n}\int_0^1 \int_0^1 \phi_k(t)\phi_k(s) \sum_{m=1}^{n-1}m\cdotp m^{2H-2}\left(\frac{t-s}{m}+1\right)^{2H-2}dtds.\\
\end{align*}
Now we can use the binomial series expansion to get
\[\left(\frac{t-s}{m}+1\right)^{2H-2} = \sum_{l=0}^\infty {2H-2 \choose l}(t-s)^lm^{-l}\]
and use the zero integral assumption in order to evaluate the above expression. We conclude:
\begin{align*}
\frac{1}{n}&\int_0^1 \int_0^1 \phi_k(t)\phi_k(s) \sum_{m=1}^{n-1}nm^{2H-2}\left(\frac{t-s}{m}+1\right)^{2H-2}dtds\\
&= \int_0^1 \int_0^1 \phi_k(t)\phi_k(s) \sum_{m=1}^{n-1}m^{2H-2} \sum_{l=2}^\infty {2H-2 \choose l}(t-s)^lm^{-l} dtds\\
&= \int_0^1 \int_0^1 \phi_k(t)\phi_k(s)  \sum_{l=2}^\infty {2H-2 \choose l}(t-s)^l \sum_{m=1}^{n-1}m^{2H-2-l} dtds.
\end{align*}
By dominated convergence we now obtain
\begin{align*}
\lim_{n\to\infty} &\int_0^1 \int_0^1 \phi_k(t)\phi_k(s)  \sum_{l=2}^\infty {2H-2 \choose l}(t-s)^l \sum_{m=1}^{n-1}m^{2H-2-l} dtds\\
&= \int_0^1 \int_0^1 \phi_k(t)\phi_k(s)  \sum_{l=2}^\infty {2H-2 \choose l}(t-s)^l \sum_{m=1}^{\infty}m^{2H-2-l} dtds\\
&=\int_0^1 \int_0^1 \phi_k(t)\phi_k(s)  \sum_{l=2}^\infty {2H-2 \choose l}(t-s)^l \zeta (l+2-2H) dtds,
\end{align*}
since the $m^{2H-2-l}$ are summable for $l\geq 1$.\\
In a similar manner, we get
\begin{align*}
\frac{1}{n} &\int_0^1 \int_0^1 \phi_k(t)\phi_k(s) \sum_{m=1}^{n-1}m\cdotp m^{2H-2}\left(\frac{t-s}{m}+1\right)^{2H-2}dtds\\
& \int_0^1 \int_0^1 \phi_k(t)\phi_k(s)  \sum_{l=2}^\infty {2H-2 \choose l}(t-s)^l \frac{1}{n}\sum_{m=1}^{n-1}m^{2H-1-l} dtds,
\end{align*}
which converges to zero, again, due to summability of $m^{2H-1-l}$.

In total, we conclude that the second summand in \eqref{eq:2} converges to
\[\int_0^1 \int_0^1 \phi_k(t)\phi_k(s)  \sum_{l=2}^\infty {2H-2 \choose l}(t-s)^l \zeta (l+2-2H) dtds,\]
and thus, with a symmetric calculation, the third summand tends to
\[\int_0^1 \int_0^1 \phi_k(t)\phi_k(s)  \sum_{l=2}^\infty {2H-2 \choose l}(s-t)^l \zeta (l+2-2H) dtds.\]
Adding up the two yields the desired result.
\end{proof}
The next proposition provides some additional information about $\bar{Z}_k$ and provides a more concise form for its variance.
\begin{proposition}
The variance of $\bar{Z}_k$ form the previous proposition can be simplified to
\[\frac{1}{\Gamma (2-2H)}\int_0^1\int_0^1 \phi_k(t)\phi_k(s)\int_0^\infty \frac{u^{1-2H}}{e^u-1}(e^{u(1-|t-s|)}+e^{u|t-s|}-2)du dt ds.\]
This expression is positive for all bounded non zero $L^2$-functions $\phi_k$ with zero integrals.
\end{proposition}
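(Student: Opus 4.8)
The plan is to treat the two assertions separately: first the algebraic simplification of the variance from Proposition~\ref{prop:1}, then its strict positivity. For the identity I would feed two standard integral representations into that formula, both applied at fixed $t,s$. The Riesz kernel in the first summand I would replace, for $x=|t-s|>0$, by $x^{2H-2}=\frac{1}{\Gamma(2-2H)}\int_0^\infty u^{1-2H}e^{-ux}\,du$, legitimate since $2-2H\in(0,1)$; for the $\zeta$-terms I would use $\zeta(s)=\frac{1}{\Gamma(s)}\int_0^\infty \frac{u^{s-1}}{e^u-1}\,du$ with $s=2l+2-2H$. The decisive cancellation is a binomial--Gamma identity: writing $a=2-2H$ one has $\binom{2H-2}{2l}=\binom{-a}{2l}=\frac{\Gamma(2l+a)}{\Gamma(a)\,(2l)!}$, so the factor $\Gamma(2l+2-2H)$ produced by the $\zeta$-representation is exactly cancelled and the $l$-th coefficient collapses to $\frac{2}{\Gamma(2-2H)\,(2l)!}\int_0^\infty \frac{u^{2l+1-2H}}{e^u-1}\,du$.

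After interchanging sum and integral (all summands being nonnegative at fixed $t,s$, so Tonelli applies) I would recognise the power series $\sum_{l\ge1}\frac{(u(t-s))^{2l}}{(2l)!}=\cosh(u|t-s|)-1$. At this stage the variance is the sum of $\frac{1}{\Gamma(2-2H)}\int_0^\infty u^{1-2H}e^{-u|t-s|}\,du$ and $\frac{2}{\Gamma(2-2H)}\int_0^\infty \frac{u^{1-2H}}{e^u-1}\big(\cosh(u|t-s|)-1\big)\,du$, both integrated against $\phi_k(t)\phi_k(s)$. The two $u$-integrands then merge through the elementary identity $e^{-ux}+\frac{e^{ux}+e^{-ux}-2}{e^u-1}=\frac{e^{u(1-x)}+e^{ux}-2}{e^u-1}$ (clear the denominator $e^u-1$), which reproduces exactly the claimed single integral. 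The only bookkeeping is convergence: near $u=0$ the bracket $e^{u(1-x)}+e^{ux}-2$ is $O(u)$, taming the factor $u^{-2H}$, and for $|t-s|\in(0,1)$ the integrand decays exponentially at infinity, so the exchange with the $(t,s)$-integral is justified by boundedness of $\phi_k$.

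For the positivity I would \emph{not} argue termwise, since the $\zeta$-series contributes a negative form: already $\int_0^1\int_0^1 \phi_k(t)\phi_k(s)(t-s)^2\,dt\,ds=-2\big(\int_0^1 t\phi_k(t)\,dt\big)^2\le 0$ under the zero-integral hypothesis, so positivity is a genuine balance. Instead I fix $u>0$ and rewrite the inner kernel, with $x=|t-s|$, as $e^{u(1-x)}+e^{ux}-2=2e^{u/2}\cosh\!\big(u(\tfrac12-x)\big)-2$. Since $\int_0^1\phi_k=0$, the constant $-2$ drops out of the quadratic form. The key observation is that, after $1$-periodic extension, $x\mapsto\cosh(u(\tfrac12-x))$ is a function of the circular distance on $\mathbb R/\mathbb Z$, and its Fourier coefficients are $\int_0^1\cosh(u(\tfrac12-x))e^{-2\pi i m x}\,dx=\frac{2u\sinh(u/2)}{u^2+4\pi^2m^2}>0$ for all $m$ and all $u>0$. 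By Parseval the inner form equals $2e^{u/2}\sum_{m\neq0}\frac{2u\sinh(u/2)}{u^2+4\pi^2m^2}|c_m|^2$, with $c_m$ the Fourier coefficients of $\phi_k$ (and $c_0=0$); this is strictly positive whenever $\phi_k\not\equiv0$. Integrating against the strictly positive weight $\frac{u^{1-2H}}{e^u-1}$ over $(0,\infty)$ preserves strict positivity, yielding the claim.

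The main obstacle I anticipate is the positivity, precisely because the single-integral representation obscures the competition between the positive (Riesz) and negative ($\zeta$) contributions; the decisive step is to reassemble the combined kernel into the circular form $\cosh(u(\tfrac12-|t-s|))$ and read off its nonnegative Fourier coefficients, after which Parseval and the positivity of the $u$-weight conclude. The identity itself is essentially computation, the only delicate points being the binomial--Gamma collapse and the Fubini/Tonelli justification near $u=0$.
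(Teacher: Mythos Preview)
Your argument is correct and, for the simplification of the variance, essentially identical to the paper's: both insert the Euler integrals for $x^{2H-2}$ and for $\Gamma\zeta$, collapse $\binom{2H-2}{2l}\Gamma(2l+2-2H)$ to $\Gamma(2-2H)/(2l)!$, sum to $\cosh-1$, and merge the two pieces via the algebraic identity you state.

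For positivity the underlying idea is also the same---diagonalise the $u$-kernel in the trigonometric basis---but the organisation differs. The paper defers this to an appendix proposition which evaluates $\int_0^1\!\int_0^1 f_n(t)f_m(s)\bigl(e^{u(1-|t-s|)}+e^{u|t-s|}-2\bigr)\,dt\,ds$ by hand for every pair of real Fourier modes via trigonometric addition formulas, finds the off-diagonal entries vanish and the diagonal equals $\tfrac{2u(e^u-1)}{u^2+(2\pi n)^2}$, and then expands a general $\phi_k$ in that basis. Your route is more structural: the rewriting $e^{u(1-x)}+e^{ux}=2e^{u/2}\cosh\!\bigl(u(\tfrac12-x)\bigr)$ exhibits the kernel (after discarding the constant $-2$, which is killed by $\int_0^1\phi_k=0$) as a genuine circular convolution kernel on $\mathbb R/\mathbb Z$, so a single Parseval step diagonalises it and the positive Fourier coefficients $\tfrac{2u\sinh(u/2)}{u^2+4\pi^2m^2}$ give strict positivity at once. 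The two computations agree because $2e^{u/2}\cdot 2u\sinh(u/2)=2u(e^u-1)$; your packaging avoids the case-by-case trigonometry at the cost of checking that $g(|t-s|)$ is indeed of the form $h(t-s\bmod 1)$, which follows from the symmetry $g(1-x)=g(x)$. One small point: your Fubini justification ``decays exponentially for $|t-s|\in(0,1)$'' is pointwise off the diagonal; to upgrade it to absolute integrability over $[0,1]^2\times(0,\infty)$ it is cleanest to integrate in $(t,s)$ first and observe that $\int_0^1\!\int_0^1\bigl(e^{u(1-|t-s|)}+e^{u|t-s|}\bigr)\,dt\,ds\lesssim (e^u-1)/u$, giving an $O(u^{-2H})$ tail.
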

\begin{proof}
Our goal is to show that
\begin{align*}
\int_0^1 &\int_0^1 \phi_k (t)\phi_k (s)|t-s|^{2H-2}dt ds\\
&+ \sum_{l=1}^\infty 2 {{2H-2}\choose{2l}}\zeta (2l+2-2H) \int_0^1\int_0^1 \phi_k (t)\phi_k (s)(t-s)^{2l}dt ds
\end{align*}
can be rewritten in the above integral form. For the first summand the definition of Gamma function provides the representation
\[|t-s|^{2H-2}=\frac{1}{\Gamma (2-2H)} \int_0^\infty u^{1-2H}e^{-u|s-t|}du.\]
For the other summands we make use of the formula $\Gamma(z)\zeta (z)=\int_0^\infty \frac{u^{z-1}}{e^u-1}du$ for $z>1$ and rewrite them as follows:
\begin{align*}
2 & \sum_{l=1}^\infty {{2H-2}\choose{2l}}\zeta (2l+2-2H) \int_0^1\int_0^1 \phi_k (t)\phi_k (s)(t-s)^{2l}dt ds\\
&=2\int_0^1\int_0^1 \phi_k(t)\phi_k(s)\sum_{l=1}^\infty\frac{(2H-2)_{2l}}{(2l)!}\frac{1}{\Gamma (2l+2-2H)}\int_0^\infty \frac{u^{2l+1-2H}}{e^u-1}du (t-s)^{2l}ds dt\\
&=2\int_0^1\int_0^1 \phi_k(t)\phi_k(s)\int_0^\infty \sum_{l=1}^\infty \frac{(2H-2)_{2l}}{(2l)!\Gamma(2-2H)(2-2H)^{(2l)}}\frac{u^{2l+1-2H}(t-s)^{2l}}{e^u-1}dudsdt\\
&=\frac{1}{\Gamma (2-2H)}2 \int_0^1\int_0^1 \phi_k(t)\phi_k(s)\int_0^\infty \frac{u^{1-2H}}{e^u-1}\sum_{l=1}^\infty \frac{(u(t-s))^{2l}}{(2l)!}du ds dt\\
&=\frac{1}{\Gamma (2-2H)}2 \int_0^1\int_0^1 \phi_k(t)\phi_k(s)\int_0^\infty \frac{u^{1-2H}}{e^u-1}(\cosh (u(t-s))-1)du ds dt,
\end{align*}
where $(z)_{k}$ and $(z)^{(k)}$ denote the falling and rising factorials respectively. For even $k$ it follows from the definition that $(-z)_{k}=(z)^{(k)}$.

Recall that
\[\cosh(u(t-s))-1=\frac{e^{u(t-s)}+e^{u(s-t)}-2}{2}=\frac{e^{u|t-s|}+e^{-u|t-s|}-2}{2}\]
for any $t,\,s$ and add up the summands of the variance expression in order to obtain
\begin{align*}
\int_0^1 &\int_0^1 \phi_k (t)\phi_k (s)\frac{1}{\Gamma (2-2H)}\int_0^\infty u^{1-2H} \left(e^{-u|s-t|}+ \frac{2}{e^u-1}(\cosh(u(t-s))-1)\right)dudsdt\\
&=\int_0^1 \int_0^1 \phi_k (t)\phi_k (s)\frac{1}{\Gamma (2-2H)}\int_0^\infty u^{1-2H} \left(e^{-u|s-t|}+ \frac{e^{u|t-s|}+e^{-u|t-s|}-2}{e^u-1}\right)dudsdt\\
&=\int_0^1 \int_0^1 \phi_k (t)\phi_k (s)\frac{1}{\Gamma (2-2H)}\int_0^\infty \frac{u^{1-2H}}{e^u-1}(e^{u(1-|t-s|)}+e^{u|t-s|}-2)dudsdt,
\end{align*}
which was our claim.

Now let us prove that the obtained variance is indeed positive, thus confirming the rate of convergence suggested above. For elements of the real $L^2([0,\,1])$-Fourier basis this claim is shown (up to an application of Fubini's Theorem) in Proposition \ref{prop:3}. We also obtain from this proposition that in this particular case the variance simplifies to
\[\frac{1}{\Gamma (2-2H)}\int_0^\infty \frac{u^{2-2H}}{(2\pi n)^2+u^2}du\]
for $\phi_k(x)=\sqrt{2}\sin(2\pi n)$ or $\phi_k(x)=\sqrt{2}\cos(2\pi n)$.

An arbitrary $L^2$-function $\phi_k$ with zero integral can be written as $\sum_{n\in\mathbb Z\backslash \{0\}}c_n f_n$, where $f_n$ are elements of the Fourier basis without the constant component and we have for such a decomposition:
\begin{eqnarray*}
\lefteqn{\int_0^1  \int_0^1 \phi_k (t)\phi_k (s)\frac{1}{\Gamma (2-2H)}\int_0^\infty \frac{u^{1-2H}}{e^u-1}(e^{u(1-|t-s|)}+e^{u|t-s|}-2)dudsdt}&&\\
&=&\int_0^1 \int_0^1 \sum_{m,\,n\in\mathbb Z\backslash \{0\}}c_n f_n(t)c_m f_m(s)\frac{1}{\Gamma (2-2H)}\times\\
&&\int_0^\infty \frac{u^{1-2H}}{e^u-1}(e^{u(1-|t-s|)}+e^{u|t-s|}-2)dudsdt\\
&=&\sum_{m,\,n\in\mathbb Z\backslash \{0\}}c_mc_n\frac{1}{\Gamma (2-2H)}\times\\
&&\int_0^\infty \frac{u^{1-2H}}{e^u-1}  \int_0^1 \int_0^1 f_m (t)f_n (s)(e^{u(1-|t-s|)}+e^{u|t-s|}-2) dsdt du\\
&=&\sum_{n\in\mathbb Z\backslash \{0\}}c_n^2\frac{1}{\Gamma (2-2H)}\times\\
&&\int_0^\infty \frac{u^{1-2H}}{e^u-1}  \int_0^1 \int_0^1 f_n (t)f_n (s)(e^{u(1-|t-s|)}+e^{u|t-s|}-2) dsdt du,
\end{eqnarray*}
since all the off-diagonal terms disappear, as was demonstrated in Proposition \ref{prop:3}. We can now use the result for the Fourier basis  and complete the calculations:
\begin{align*}
\int_0^1 & \int_0^1 \phi_k (t)\phi_k (s)\frac{1}{\Gamma (2-2H)}\int_0^\infty \frac{u^{1-2H}}{e^u-1}(e^{u(1-|t-s|)}+e^{u|t-s|}-2)dudsdt\\
&=\frac{1}{\Gamma (2-2H)}\sum_{n\in\mathbb Z\backslash \{0\}}c_n^2 \int_0^\infty \frac{u^{2-2H}}{(2\pi n)^2+u^2}du,
\end{align*}
which is clearly positive if $\phi_k$ is nonzero.
\end{proof}
In the context of a different scaling for some of the components there is an additional remark to be made.
\begin{remark}
Since for each $k\in \{1,\dots ,p\}$ the term $M_1$ in $\hat{\mu}_k-\mu_k$ is Gaussian, if different components of the vector $\mu$ are weighted differently (depending on whether the corresponding $\phi_k$ have zero integrals), the whole vector converges jointly to a multivariate Gaussian. With a calculation similar to those in \ref{prop:1} one can show that the components with different weights are uncorrelated.
\end{remark}

\section{Appendix}
In this chapter we have collected some technical results that are used in the proofs of this paper.
\begin{lemma}
\label{l:4}
For a centred normal sequence $(X_n)_{n\in\N}$ of random variables we have: The squared $L^2$ norm of order at most $\frac{1}{n^\beta}$ for $\beta > 0$ implies almost sure convergence.
\end{lemma}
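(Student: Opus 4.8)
The plan is to exploit the fact that, for centred Gaussian random variables, moments of every order are controlled by the variance, and then to convert the merely polynomial decay of the $L^2$ norm into a summable tail bound via a high-order Markov inequality combined with the Borel--Cantelli lemma.

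First I would record the standard Gaussian moment identity: if $X_n$ is centred normal with variance $\sigma_n^2 := \E[X_n^2]$, then for every positive integer $k$ one has
\[
\E[|X_n|^{2k}] = (2k-1)!!\,\sigma_n^{2k}.
\]
By hypothesis $\sigma_n^2 \lesssim n^{-\beta}$, so $\E[|X_n|^{2k}] \lesssim n^{-\beta k}$, where the implied constant depends on $k$ and $\beta$ but not on $n$.

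Next, for a fixed $\varepsilon > 0$ and the same integer $k$, Markov's inequality applied to $|X_n|^{2k}$ yields
\[
P(|X_n| > \varepsilon) \leq \frac{\E[|X_n|^{2k}]}{\varepsilon^{2k}} \lesssim \frac{1}{\varepsilon^{2k}}\, n^{-\beta k}.
\]
The key step is to choose $k$ large enough that this bound is summable in $n$: since $\beta > 0$ is fixed, any integer $k > 1/\beta$ gives $\beta k > 1$, so that $\sum_{n=1}^\infty n^{-\beta k} < \infty$ and hence $\sum_{n=1}^\infty P(|X_n| > \varepsilon) < \infty$.

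Finally, the Borel--Cantelli lemma shows that $P(|X_n| > \varepsilon \text{ infinitely often}) = 0$, and since $\varepsilon > 0$ was arbitrary it follows that $X_n \to 0$ almost surely. The point requiring care is that the variance decays only polynomially, so a second-moment (Chebyshev) bound alone would not give a summable series; the entire argument hinges on Gaussianity, which permits passing to an arbitrarily high even moment and thereby replacing the exponent $\beta$ by $\beta k$, which can be made larger than $1$. I do not expect any substantive obstacle beyond this choice of $k$, which is exactly what makes the polynomial-rate hypothesis (rather than an exponential one) sufficient here.
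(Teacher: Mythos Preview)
Your proof is correct and follows essentially the same route as the paper: both exploit the Gaussian moment relation $\E[X_n^{2k}]=C_k(\E[X_n^2])^k$, choose $k$ with $\beta k>1$, apply Markov's inequality to obtain a summable tail series, and conclude via Borel--Cantelli. The only cosmetic difference is that you name the constant $(2k-1)!!$ and invoke Borel--Cantelli explicitly, whereas the paper leaves these implicit.
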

\begin{proof}
First note that the squared $L^2$ norm of a centred normal random variable is its variance. For $k\in\N$ the $2k$-th moment is completely determined by it; we have
\[\E [X_n^{2k}]=C_k\E[X_n^2]^k\lesssim \frac{1}{n^{\beta k}} \]
by assumption.
If we now check the summability criterion, this consideration allows us to get the result by Markov's inequality for $f(x)=x^{2k}$ and $k$ such that $\beta k>1$:
\[\sum_{n=1}^{\infty}P(|X_n|>\varepsilon)\leq \sum_{n=1}^{\infty}\frac{\E [X_n^{2k}]}{\varepsilon^{2k}}=\frac{1}{\varepsilon^{2k}}C_k\sum_{n=1}^{\infty}\E [X_n^2]^k\lesssim \sum_{n=1}^{\infty}\frac{1}{n^{\beta k}}<\infty. \]
\end{proof}
\begin{proposition}
\label{prop:2}
For $\alpha >0$ we have $\int_0^t e^{\alpha u} u^{2H-2}du\lesssim t^{2H-2}e^{\alpha t}$.
\end{proposition}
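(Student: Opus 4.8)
The plan is to exploit the fact that the integrand $e^{\alpha u}u^{2H-2}$ is dominated, for large $u$, by its exponential factor, so that the value of the integral is governed by the behaviour near the upper endpoint $u=t$. Concretely, I would split the integral at $t/2$ and estimate the two pieces by elementary means, treating the lower piece (where the singularity of $u^{2H-2}$ at the origin lives) and the upper piece (where the exponential is largest) separately. Recall that $H\in(\tfrac12,1)$, so $2H-2\in(-1,0)$, a fact that is used in both estimates.

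For the upper piece, on $[t/2,t]$ I would use that $u\mapsto u^{2H-2}$ is decreasing, whence $u^{2H-2}\le (t/2)^{2H-2}$ throughout. Pulling this bound out and integrating the exponential gives
\[
\int_{t/2}^t e^{\alpha u}u^{2H-2}\,du \le (t/2)^{2H-2}\int_{t/2}^t e^{\alpha u}\,du \le \frac{2^{2-2H}}{\alpha}\,t^{2H-2}e^{\alpha t},
\]
which is already of the required form.

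For the lower piece, on $[0,t/2]$ I would bound $e^{\alpha u}\le e^{\alpha t/2}$ and use the integrability of $u^{2H-2}$ near the origin (valid since $2H-2>-1$):
\[
\int_0^{t/2} e^{\alpha u}u^{2H-2}\,du \le e^{\alpha t/2}\int_0^{t/2}u^{2H-2}\,du = \frac{2^{1-2H}}{2H-1}\,t^{2H-1}e^{\alpha t/2}.
\]
To compare this with the target $t^{2H-2}e^{\alpha t}$, I would form the ratio, which equals a constant times $t\,e^{-\alpha t/2}$; since $t\mapsto t e^{-\alpha t/2}$ is bounded on $[0,\infty)$ (it vanishes at both ends and attains its maximum at $t=2/\alpha$), the lower piece is likewise $\lesssim t^{2H-2}e^{\alpha t}$. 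Adding the two estimates yields the claim.

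The argument presents no real obstacle; the only point requiring care is that the implicit constant be uniform over all $t>0$ rather than merely asymptotic as $t\to\infty$, and this is exactly what the boundedness of $t e^{-\alpha t/2}$ secures for the lower piece. One could alternatively attempt integration by parts with $v=\tfrac1\alpha e^{\alpha u}$, but then both the boundary term and the resulting $u^{2H-3}$ integrand blow up non-integrably at the origin, so the splitting approach is decidedly cleaner.
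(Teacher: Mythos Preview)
Your argument is correct. The splitting at $t/2$ is clean: on the upper half you exploit the monotonicity of $u\mapsto u^{2H-2}$, and on the lower half the integrability of $u^{2H-2}$ at the origin together with the boundedness of $t\,e^{-\alpha t/2}$ on $[0,\infty)$ gives a constant that is uniform in $t$. Nothing is missing.

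The paper proceeds differently: it invokes an asymptotic formula from Abramowitz--Stegun to obtain the bound for large $t$, and then handles small $t$ by a soft continuity argument (the left-hand side is continuous on $[0,t_0]$ while the right-hand side blows up at $0$, so a suitable constant exists on any compact interval); the final constant is the maximum of the two. Your approach is more elementary and self-contained---no external reference is needed and the constants are explicit---while the paper's proof is shorter to state but outsources the substantive asymptotic step. Both reach the same conclusion; yours would arguably be preferable in a context where one wants to avoid citing a handbook for what is, as you show, a two-line estimate.
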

\begin{proof}
An analytic result from \cite{Abra} yields that the left-hand side is bounded by a constant times the right-hand side for large $t\in\mathbb R^+$. For smaller $t$, that is, for $t\leq t_0$ for some $t_0$, note that the left side is continuous while the right side has one discontinuity at $0$, where it tends to infinity. Therefore, it is also possible to find a constant for which the bound holds on the compact interval $[0,\,t_0]$. By taking the maximum of the two we obtain the result.
\end{proof}
\begin{proposition}
\label{prop:3}
Let $(f_n)_{n\in\mathbb Z\backslash \{0\}}$ be the real $L^2([0,\,1])$-Fourier basis without the constant element, i.e. $f_n(x)=\sqrt{2}\sin (2\pi n x)$ and $f_{-n}(x)=\sqrt{2}\cos (2\pi n x)$ for $n\in\mathbb N$. Then for any $u>0$ the integral
\[\int_0^1\int_0^1 f_n(t)f_m(s)(e^{u(1-|t-s|)}+e^{u|t-s|}-2)dtds\]
is positive and equal to $\frac{2(e^u-1)u}{(2\pi n)^2+u^2}$ if $m=n$ and zero otherwise.
\end{proposition}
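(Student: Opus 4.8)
The plan is to recognise the kernel as a translation-invariant (convolution) kernel on the torus and then read off the answer from a single Fourier coefficient. Write $g(x):= e^{u(1-|x|)}+e^{u|x|}-2$, so that the integrand equals $f_n(t)f_m(s)\,g(t-s)$ with $t-s$ ranging over $[-1,1]$. The first, and genuinely essential, step is to verify that $g$ is the restriction to $[-1,1]$ of an even, continuous, $1$-periodic function: on $[0,1]$ one has $g(x)=e^u e^{-ux}+e^{ux}-2$, and a direct comparison shows $g(x)=g(x+1)$ for $x\in[-1,0]$, since both sides equal $e^{u(x+1)}+e^{-ux}-2$. This consistency is exactly what lets one replace $g(t-s)$ by its $1$-periodic Fourier series even though the argument $t-s$ runs through an interval of length two.

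Second, I would compute the Fourier coefficients $\hat g_k=\int_0^1 g(x)e^{-2\pi i k x}\,dx$. Evaluating $\int_0^1 e^{\pm ux}e^{-2\pi i kx}\,dx$ and using $e^{-2\pi i k}=1$, the two exponential pieces contribute $\tfrac{e^u-1}{u+2\pi i k}$ and $\tfrac{e^u-1}{u-2\pi i k}$, so that for $k\neq 0$
\[
\hat g_k=(e^u-1)\Big(\frac{1}{u+2\pi i k}+\frac{1}{u-2\pi i k}\Big)=\frac{2u(e^u-1)}{u^2+(2\pi k)^2},
\]
which is precisely the target value with $k=n$. Since $g$ is real and even, $\hat g_k=\hat g_{-k}>0$ for all $k$ and $u>0$.

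Third, I would substitute the expansion $g(t-s)=\sum_k\hat g_k e^{2\pi i k(t-s)}$ into the double integral and invoke orthogonality of $\{e^{2\pi i kt}\}$ on $[0,1]$. Expanding $f_n=\sqrt2\sin(2\pi n\cdot)$ and $f_{-n}=\sqrt2\cos(2\pi n\cdot)$ into exponentials, the inner integrals $\int_0^1 f_n(t)e^{2\pi i k t}\,dt$ and $\int_0^1 f_m(s)e^{-2\pi i k s}\,ds$ vanish except at $k=\pm n$ and $k=\pm m$ respectively; in particular the $k=0$ mode never contributes because $f_n$ has zero mean, so $\hat g_0$ is irrelevant. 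Only $k=\pm n$ can survive, forcing $m=n$ for a nonzero result; collecting those terms and using $\hat g_n=\hat g_{-n}$ leaves exactly $\hat g_n$ on the diagonal, while the sine–cosine cross terms cancel because the two surviving contributions carry opposite signs. Positivity in the case $m=n$ is then immediate from the closed form of $\hat g_n$.

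The main obstacle is not the algebra but the justification underpinning the first step: confirming the hidden $1$-periodicity of $g$ and thereby legitimising termwise integration of its Fourier series against the bounded functions $f_n,f_m$. Once the periodic convolution structure is secured, the orthogonality relations do all the remaining work and the whole statement collapses to the evaluation of the single coefficient $\hat g_n$.
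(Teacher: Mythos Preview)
Your argument is correct. The crucial observation that $g(x)=e^{u(1-|x|)}+e^{u|x|}-2$ extends to a $1$-periodic even function is exactly what makes the integral a convolution on the torus, and your computation $\hat g_k=2u(e^u-1)/(u^2+(2\pi k)^2)$ is right; since these coefficients are $O(k^{-2})$ the series converges absolutely and termwise integration is unproblematic. The orthogonality bookkeeping you sketch (only $k=\pm n$ survive, $\hat g_n=\hat g_{-n}$, sine--cosine cross terms cancel) checks out and delivers precisely $\hat g_{|n|}$ on the diagonal and $0$ otherwise.

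The paper's proof reaches the same destination by a more hands-on real-variable route: it substitutes $v=t-s$, expands $f_m(t-v)$ via the addition formulas for sine and cosine, and then shows directly that $\int_{t-1}^t f_m(v)\bigl(e^{u(1-|v|)}+e^{u|v|}\bigr)\,dv=\int_0^1 f_m(v)\bigl(e^{u(1-v)}+e^{uv}\bigr)\,dv$ is independent of $t$---which is the periodicity of $g$ in disguise. Orthogonality of the real basis then kills the off-diagonal terms, and the diagonal value is obtained from an explicit evaluation of $\int_0^1 \cos(2\pi n v)(e^{u(1-v)}+e^{uv})\,dv$. Your approach packages all of this as ``convolution operator, hence diagonalised by Fourier characters with eigenvalues $\hat g_k$''; it is more conceptual and avoids the separate case analysis for sine/cosine, at the small cost of passing through complex exponentials. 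The paper's argument, by contrast, stays entirely within real trigonometry and makes the independence-of-$t$ step (your periodicity check) explicit as an integral identity. Both buy the same thing; yours scales more naturally to other periodic kernels.
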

\begin{proof}
Let us wirte $z=e^u$ and calculate for $m,\,n\in\mathbb Z\backslash \{0\}$:
\begin{align*}
\int_0^1 &\int_0^1 f_n(t)f_m(s)(z^{(1-|t-s|)}+z^{|t-s|}-2)dtds\\
&=\int_0^1\int_{t-1}^t f_n(t)f_m(t-v)(z^{1-|v|}+z^{|v|})dvdt\\
&=\int_0^1 f_n(t) \int_{t-1}^t f_m(t-v)(z^{1-|v|}+z^{|v|})dvdt.
\end{align*}
By classical trigonometric identities we can decompose $f_m(t-v)$ as
\[\sqrt{2}f_m(t-v) =f_m(t)f_{-m}(v)-f_{-m}(t)f_m(v)\]
if $m$ is positive and
\[\sqrt{2}f_m(t-v)=f_m(t)f_m(v)+f_{-m}(t)f_{-m}(v)\]
if $m$ is negative. Thus, for the second part of the statement it suffices to show that the integral
\[\int_{t-1}^t f_m(v)(z^{1-|v|}+z^{|v|})dv\]
is independent of $t$ for all $m\in \mathbb Z\backslash \{0\}$. This is indeed the case, because
\[\int_{t-1}^0f_{m}(v)(z^{1+v}+z^{-v})dv=\int_t^1 f_m(v)(z^{1-v}+z^v), \]
and therefore,
\[\int_{t-1}^t f_m(v)(z^{1-|v|}+z^{|v|})dv=\int_{0}^1 f_m(v)(z^{1-v}+z^{v})dv\]
is indeed independent of $t$. For symmetry reasons the integral vanishes for $m>0$.

If $n=m$, the same trigonometric identities can be used to show that
\[\int_0^1\int_0^1 f_n(t)f_n(s)(e^{u(1-|t-s|)}+e^{u|t-s|}-2)dtds=\frac{1}{\sqrt{2}}\int_0^1 f_{-n}(v)(z^{1-v}+z^v)dv\]
if $n$ is positive and
\[\int_0^1\int_0^1 f_n(t)f_n(s)(e^{u(1-|t-s|)}+e^{u|t-s|}-2)dtds=\frac{1}{\sqrt{2}}\int_0^1 f_{n}(v)(z^{1-v}+z^v)dv\]
if $n$ is negative. Since
\[\int_0^1 \cos(2\pi n v)(z^{1-v}+z^v)dv=\frac{2(z-1)\log(z)}{(2\pi n)^2+(\log(z))^2}=\frac{2(e^u-1)u}{(2\pi n)^2+u^2} \]
is positive for all $u>0$, the first part of the claim is proved.
\end{proof}

\end{document}